\title{Reducts of the Random Bipartite Graph}
\author{Yun Lu}
\address{Department of Mathematics\\
Kutztown University of PA\\
Kutztown, PA 19530}
\email{lu@kutztown.edu}
\thanks{The author thanks the anonymous referee for valuable feedback on an earlier version of this paper. This paper is a revision of the author's doctoral dissertation, written under the direction of Carol Wood at Wesleyan University. The author would like to thank Dr.Wood for her guidance and for helpful comments on earlier versions of this paper.}
\newtheorem{thm}{Theorem}[section]
\newtheorem{lem}[thm]{Lemma}
\newtheorem{defn}[thm]{Definition}
\newtheorem*{claima}{Claim A}
\newtheorem*{claimb}{Claim B}
\newtheorem*{claimc}{Claim C}
\begin{document}

\begin{abstract}
Let $\Gamma$ be the random bipartite graph, a countable graph with two infinite sides, edges randomly distributed between the sides, but no edges within a side. In this paper, we investigate the reducts of $\Gamma$ that preserve sides. We classify the closed permutation subgroups containing the group $Aut(\Gamma)^*$, where $Aut(\Gamma)^*$ is the group of all isomorphisms and anti-isomorphisms of $\Gamma$ preserving the two sides.  Our results rely on a combinatorial theorem of Ne\v{s}et\v{r}il-R\"{o}dl and a strong finite submodel property for $\Gamma$.
\end{abstract}
\maketitle

\section{Introduction}

 As in \cite{sT96}, a reduct of a structure $\Gamma$ is a structure with the same underlying set as $\Gamma$, for some relational language, each of whose relations is $\emptyset$-definable in the original structure. If $\Gamma$ is $\omega$-categorical, then a reduct of $\Gamma$ corresponds to a closed permutation subgroup in $Sym(\Gamma)$ (the full symmetric group  on the underlying set of $\Gamma$) that contains  $Aut(\Gamma)$ (the automorphism group  of $\Gamma$). Two interdefinable reducts are considered to be equivalent. That is, two reducts of a structure $\Gamma$ are equivalent if they have the same $\emptyset$-definable sets, or, equivalently, they have the same automorphism groups. There is a one-to-one correspondence between equivalence classes of reducts $N$ and closed subgroups of $Sym(\Gamma)$ containing $Aut(\Gamma)$ via $N\mapsto Aut(N)$ (see \cite{sT96}).

There are currently a few $\omega$-categorical structures whose reducts have been explicitly classified. In 1977, Higman classified the reducts of the structure $(\mathbb{Q}, <)$ (see \cite{gH77}). In 2008, Markus Junker and Martin Ziegler classified the reducts of expansions of $(\mathbb{Q}, <)$ by constants and unary predicates (see \cite{mJ05}). Simon Thomas showed that there are finitely many reducts of the random graph (\cite{sT91}) in 1991, and of the random hypergraphs (\cite{sT96}) in 1996. In 1995 James Bennett proved similar results for the random tournament, and for the random $k$-edge coloring graphs (\cite{jB95}). In this paper, we investigate the reducts of the random bipartite graph that preserve sides. We find it convenient to consider a bipartite graph in a language with two unary predicates (one side $R_l$, the other side $R_r$) and two binary predicates (edge $P_1$, not edge $P_2$). Equivalently, we analyze the closed subgroups of $Sym(R_l)\times Sym(R_r)$ containing $Aut(\Gamma)$, where $R_l$, $R_r$ denote the two sides of the random bipartite graph. Let $Aut(\Gamma)^*$ be a group of all isomorphisms and anti-isomorphisms preserving the two sides.  We classified all the closed subgroup of $Sym(R_l)\times Sym(R_r)$ containing $Aut(\Gamma)^*$. We have analyzed some closed groups between $Aut(\Gamma)$ and $Sym(\Gamma)$ but do not describe the results here since we do not  have a classification of all such.

\begin{defn}\label{bipartite}
A structure $G$= $(V^G, R_l^G, R_r^G, P_1^G, P_2^G)$, where $R_l^G,R_r^G\subseteq V^G$ and $P_1^G,P_2^G\subseteq R_l^G\times R_r^G$, is a \textit{bipartite graph} if it satisfies the following set of axioms:

$\exists x R_l(x)\wedge \exists x R_r(x)$;

$\forall x(R_l(x)\vee R_r(x))$;

$\forall x((R_l(x)\longrightarrow \neg R_r(x))\wedge (R_r(x)\longrightarrow \neg R_l(x)))$;

$\forall x\forall y((R_l(x)\wedge R_r(y))\longrightarrow (P_1(x, y)\vee P_2(x, y)))$;

$\forall x\forall y ((P_1(x, y)\longrightarrow (R_l(x)\wedge R_r(y)))\wedge (P_2(x, y)\longrightarrow (R_l(x)\wedge R_r(y))))$;

$\forall x\forall y((R_l(x)\wedge R_r(y))\longrightarrow ((P_1(x, y)\longrightarrow \neg P_2(x, y))\wedge (P_2(x, y)\longrightarrow \neg P_1(x, y))))$.

\end{defn}

In the rest of the paper, we will use the following notations: if $E=(a, b)\in R_l\times R_r$, then we call $(a, b)$ a cross-edge, and we say that $E$ has cross-type $P_i$ if $P_i$ holds for the pair $(a, b)$ for $i = 1, 2$. Furthermore, if $g\in Sym(\Gamma)$ and $E=(a, b)\in R_l\times R_r$, we denote $(g(a), g(b))$ by $g[E]$ . An $(m\times n)-$subgraph is a bipartite graph with $m$ vertices in $R_l$ and $n$ vertices in $R_r$. $Sym_{\{l, r\}}(\Gamma)$ denotes the group $Sym(R_l)\times Sym(R_r)$.

\begin{defn}\label{extension}
Let $n\in \mathbb{N}$. A bipartite graph satisfies the \textit{extension property} $\Theta_n$ if for any two disjoint subsets $X_{l1}$, $X_{l2}\in [R_l]^{\leq n}$, and any two disjoint subsets $X_{r1}$, $X_{r2}\in [R_r]^{\leq n}$,
\renewcommand{\labelenumi}{(\alph{enumi})}
\begin{enumerate}
\item there exists a vertex $v\in R_r$ such that $P_i(x, v)$ for every
$x\in X_{li}$ for $i=1, 2$; and
\item there exists a vertex $w\in R_l$ such that $P_i(w, x)$ for every $x\in X_{ri}$ for $i=1, 2$.
\end{enumerate}

\end{defn}
\begin{defn}
A countable bipartite graph, denoted by $\Gamma$, is \textit{random} if it satisfies the extension properties $\Theta_n$ for every $n\in \mathbb{N}$.
\end{defn}

The $\Theta_n$'s are first-order sentences, and the axioms in Definition \ref{bipartite} together with the  $\{\Theta_n\}_{n\in\mathbb{N}}$ form a complete and $\omega$-categorical theory. A random bipartite graph can be built by Fraisse-construction for bipartite graphs (see \cite{wH77}). It is countable and unique up to isomorphism. It is also easy to show that the random bipartite graph is homogeneous by a back-and-forth argument. In the rest of paper, we denote by $\Gamma$ the random bipartite graph.

\begin{defn}
Let $\Gamma$ be the random bipartite graph and $A$ be a subset of $\Gamma$. A  bijection $\sigma: \Gamma\longrightarrow
\Gamma$ is a \textit{switch} with respect to $A$ if the following conditions are satisfied: \\
for all $(a, b)\in R_l\times R_r$ and $i=1, 2$, $P_i(a, b)\longleftrightarrow P_i(\sigma(a), \sigma(b))$ if and only if $|\{a, b\}\cap A|\neq 1$.
\end{defn}
Note that a switch on any finite set of vertices can be obtained by composing single-vertex switches.

\begin{defn}
Let $X\subseteq{\{l, r\}}$. The switch group \textit{$S_X(\Gamma)$} is the closed subgroup
of $Sym_{\{l, r\}}(\Gamma)$ generated as a topological group by
\renewcommand{\labelenumi}{(\arabic{enumi})}
\begin{enumerate}
\item Aut($\Gamma$); and
\item The set of all $\sigma\in Sym_{\{l, r\}}(\Gamma)$ such that $\sigma$ is a switch with respect to some $v\in R_i$, where $i\in X$.
\end{enumerate}
\end{defn}

Since $\Gamma$ satisfies the extension property $\Theta_n$ for $n\in \mathbb{N}$ and $S_{\{l, r\}}(\Gamma)$ is closed, we can construct $\rho\in S_{\{l, r\}} (\Gamma)$ which is a switch w.r.t. $R_l$. Observe that $\rho\in S_{\{l\}} (\Gamma)\cap S_{\{r\}} (\Gamma)$. Let $G^*$ be the closed group generated by $G$ and $\rho$. Then the group $S_X(\Gamma)^*$ is the same as the group $S_X(\Gamma)$ except when $X=\emptyset$. Notice $Aut(\Gamma)^*=S_{\emptyset}(\Gamma)^*$, which is a group of permutations that either preserve all cross-types on $R_l\times R_r$, or exchange all cross-types on $R_l\times R_r$. Also notice that $Aut(\Gamma)^*=S_l(\Gamma)\cap S_r(\Gamma)$.

We now state the main result of this paper.
\begin{thm}\label{classification}
If $G$ is a closed subgroup with $Aut(\Gamma)^*\leq G<
Sym_{\{l, r\}}(\Gamma)$, then there exists a subset $X\subseteq
\{l, r\}$ such that $G=S_X{(\Gamma)^*}$.
\end{thm}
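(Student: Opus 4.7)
The plan is to show that any $g \in G \setminus Aut(\Gamma)^*$ must, modulo $Aut(\Gamma)^*$, act on cross-edges like a switch with respect to some subset of $R_l \cup R_r$; then the set $X \subseteq \{l,r\}$ consisting of those sides $i$ such that $G$ contains a switch at some vertex of $R_i$ will satisfy $G = S_X(\Gamma)^*$. The argument splits into two inclusions. The containment $S_X(\Gamma)^* \leq G$ follows from the closedness of $G$ together with a back-and-forth argument: once $G$ contains a single switch at some vertex of $R_i$, the extension properties $\Theta_n$ and the observation that $\rho \in S_{\{l\}}(\Gamma) \cap S_{\{r\}}(\Gamma)$ allow one to conjugate by automorphisms and take closures to recover all of $S_{\{i\}}(\Gamma)$. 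The real work lies in the reverse inclusion.

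For $G \leq S_X(\Gamma)^*$, fix $g \in G$ and encode its action on cross-types by the $\{0,1\}$-coloring $\chi_g$ of $R_l \times R_r$ defined by $\chi_g(a,b) = 0$ iff $(a,b)$ and $(g(a), g(b))$ have the same cross-type. I would apply the Ne\v{s}et\v{r}il-R\"{o}dl theorem for finite bipartite graphs, enriching the coloring to also record the cross-types within a prescribed finite window around each cross-edge, to extract inside any sufficiently large finite bipartite subgraph of $\Gamma$ a large copy $H$ of a prescribed random-like bipartite graph on which $\chi_g$ is canonical. A case analysis then forces this canonical behavior on $H$ to be one of four mutually exclusive types: constant, depending only on the left coordinate of the cross-edge, depending only on the right coordinate, or the XOR of the two. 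Invoking the strong finite submodel property for $\Gamma$, I would globalize these finite canonical descriptions to produce sets $A_l \subseteq R_l$ and $A_r \subseteq R_r$ such that $g$ coincides with the composition of a switch with respect to $A_l \cup A_r$ and an element of $Aut(\Gamma)^*$. If $A_l$ is nonempty then, after composing with a suitable automorphism and using closedness of $G$, one obtains that $G$ contains a switch at some single vertex of $R_l$, placing $l \in X$; similarly for $r$. Hence $g \in S_X(\Gamma)^*$.

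The main obstacle is the canonical-behavior step. The Ne\v{s}et\v{r}il-R\"{o}dl theorem is a single-coloring statement, and one must set up the coloring carefully so that the monochromatic copy it produces reflects the full pattern of $\chi_g$ across coherent bipartite substructures, ruling out exotic dependencies such as on the cross-type of the edge $(a,b)$ itself or on more subtle combinatorial features of the surrounding vertices. Combining the finite Ramsey output with the strong finite submodel property to construct globally defined $A_l$, $A_r$ introduces a secondary subtlety: one must ensure consistency across overlapping finite fragments and simultaneously on both sides of $\Gamma$. Once the canonical-form result is in hand, the theorem follows by collecting, over all $g \in G$, the sides at which switches arise into the single set $X$, and verifying that this $X$ realizes both inclusions.
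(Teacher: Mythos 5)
Your overall architecture---define $X$ as the set of sides at which $G$ contains a single-vertex switch, obtain $S_X(\Gamma)^*\leq G$ from closedness and conjugation, and prove $G\leq S_X(\Gamma)^*$ by canonizing the cross-type-preservation coloring $\chi_g$ via Ne\v{s}et\v{r}il--R\"{o}dl together with the strong finite submodel property---matches the paper's strategy in outline. But there is a genuine gap at exactly the step you flag as ``the main obstacle'' and then leave unresolved: ruling out the degenerate canonical behaviors. For an arbitrary $g\in Sym_{\{l,r\}}(\Gamma)$, the canonical form of $\chi_g$ on a large homogeneous copy need not be one of your four types (constant, left-dependent, right-dependent, XOR); it can also depend on the cross-type of $(a,b)$ itself---for instance $g$ could send every cross-edge of a large finite subgraph to $P_1$. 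Nothing in your sketch excludes this, and it cannot be excluded without invoking the hypothesis $G<Sym_{\{l,r\}}(\Gamma)$, which your reverse-inclusion argument never uses. The paper devotes Section 6 to precisely this point: Lemma \ref{6.3} shows that if every finite bipartite subgraph admitted some $g\in G$ collapsing all its cross-edges to a single cross-type, then composing such maps with automorphisms would carry any $(m\times n)$-subgraph to any other and hence generate all of $Sym_{\{l,r\}}(\Gamma)$, contradicting properness; Lemma \ref{6.4} similarly excludes the behavior ``switch only the $P_j$-edges at $v$.'' The resulting finite witnesses $B_0$ and $B_j^i$ are then planted inside the Ramsey-homogeneous set (via the SFBSP) to kill the degenerate cases. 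Without this ingredient your case analysis does not close.

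A second, smaller gap is the passage from ``$g$ is, modulo $Aut(\Gamma)^*$, a switch with respect to $A_l\cup A_r$ with $A_l\neq\emptyset$'' to ``$l\in X$,'' which you assert via ``composing with a suitable automorphism and using closedness.'' The paper needs a compactness-style argument here (Lemma \ref{H}): if $G$ contains no switch at a single vertex of $R_i$, then for each candidate vertex this failure is already witnessed on a \emph{finite} subgraph, and copies of that finite witness $H$ are embedded into the homogeneous set so that an $(m\times n)$-analysis of $g$ can never produce a side-$i$ switch. You would need an equivalent mechanism to make $A_l\neq\emptyset\Rightarrow l\in X$ rigorous, particularly when $A_l$ is infinite and co-infinite, where isolating a single-vertex switch by composing conjugates requires realizing specific symmetric differences by automorphisms and passing to the closure. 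The forward inclusion $S_X(\Gamma)^*\leq G$ as you state it is fine.
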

That is, there are only finitely many closed subgroups of $Sym_{\{l, r\}}(\Gamma)$ containing $Aut(\Gamma)^*$: $Aut(\Gamma)^*, S_{\{l\}}(\Gamma), S_{\{r\}}(\Gamma), S_{\{l, r\}}(\Gamma)$, and $Sym_{\{l, r\}}(\Gamma)$. This theorem relies on a combinatorial theorem of Ne\v{s}et\v{r}il-R\"{o}dl and the strong finite submodel property of the random bipartite graph. It is still an open question whether there are finitely many closed subgroups between $Aut(\Gamma)$ and $Sym(\Gamma)$.

Here is how the rest of the paper is organized. In section 2, we study the relations preserved by the groups $S_X{(\Gamma)}$, where $X\subseteq\{l, r\}$. In section 3, we show that the random bipartite graph has the strong finite bipartite submodel property. In section 4, we employ a technique called $(m\times n)$-analysis for the random bipartite graph. These prepare us to give an explicit classification of the closed subgroups of $Sym_{\{, r\}}(\Gamma)$ containing $Aut(\Gamma)^*$ in the rest of the paper. In section 5, we prove the first part of Theorem \ref{classification}, which says that the closed subgroups of $S_{\{l, r\}}(\Gamma)$ containing $Aut(\Gamma)^*$ are $Aut(\Gamma)^*, S_{\{l\}}(\Gamma)$, and $S_{\{r\}}(\Gamma)$, and $S_{\{l, r\}}(\Gamma)$. In section 6, we proved the existence of some special finite subgraphs of $\Gamma$, which will be used in section 7. Then in section 7 we show there is no other proper closed subgroup between $S_{\{l, r\}}(\Gamma)$ and $Sym_{\{l, r\}}(\Gamma)$, which completes the proof of Theorem \ref{classification}.


\section{Relations Preserved by Switch Groups}
In this section, we identify the relations preserved by the switch groups $S_{\{l\}}{(\Gamma)}, S_{\{r\}}{(\Gamma)}$ and $S_{\{l, r\}}{(\Gamma)}$. For convenience in discussing closures of $G\leq Sym_{\{l, r\}}(\Gamma)$, we let $\mathfrak{F}(G)=\{g\upharpoonright X \mid g\in G,X\in [\Gamma]^{<\omega}\}$.

\begin{defn}
Let $f\in Sym_{\{l, r\}}(\Gamma)$, and $S$ be a finite bipartite subgraph of $\Gamma$. We say $f$ preserves the parity of cross-types on $S$ if the number of $P_1$ cross-types in $S$ is even if and only if the number of cross-types in $f[S]$ is even.
\end{defn}

\begin{lem}\label{2and2}
$S_{\{l, r\}}(\Gamma)=\{\sigma\in Sym_{\{l, r\}}(\Gamma)\mid \sigma$
preserves the parity of cross-types in every $(2\times 2)$-subgraph
of $\Gamma\}$.
\end{lem}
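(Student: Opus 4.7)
The plan is to prove both containments. Write $T$ for the set on the right, i.e.\ the $\sigma\in Sym_{\{l,r\}}(\Gamma)$ preserving the parity of $P_1$ cross-types in every $(2\times 2)$-subgraph.

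For the inclusion $S_{\{l,r\}}(\Gamma)\subseteq T$, I would first note that $T$ is closed: for any four fixed vertices $a_1,a_2\in R_l$, $b_1,b_2\in R_r$ and any fixed image, whether parity is preserved depends only on $\sigma$ at those four points, so failure is an open condition and $T$ is closed in $Sym_{\{l,r\}}(\Gamma)$. The set $T$ contains $Aut(\Gamma)$ trivially, and it contains every single-vertex switch $s$, because if $s$ is a switch with respect to $\{v\}$ then at most one vertex of a given $(2\times 2)$-subgraph equals $v$, so exactly $0$ or $2$ of the four cross-edges have their types flipped, an even change. Since $S_{\{l,r\}}(\Gamma)$ is defined as the smallest closed subgroup containing $Aut(\Gamma)$ together with the single-vertex switches, the containment follows.

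For the harder inclusion $T\subseteq S_{\{l,r\}}(\Gamma)$, fix $\sigma\in T$. Since $S_{\{l,r\}}(\Gamma)$ is closed, it suffices to produce, for each finite $X\subseteq \Gamma$, some $g\in S_{\{l,r\}}(\Gamma)$ with $g\upharpoonright X = \sigma\upharpoonright X$. Enumerate $X=\{a_1,\dots,a_m\}\cup\{b_1,\dots,b_n\}$ with $a_i\in R_l$ and $b_j\in R_r$, and encode the cross-types on $X$ and on $\sigma(X)$ as $\mathbb{F}_2$-valued matrices $M$ and $M'$. The parity hypothesis says $D:=M-M'$ satisfies $D_{i_1 j_1}+D_{i_1 j_2}+D_{i_2 j_1}+D_{i_2 j_2}=0$ in $\mathbb{F}_2$ for all $i_1\neq i_2$, $j_1\neq j_2$; setting $i_2=j_2=1$ gives $D_{ij}=D_{i1}+D_{1j}+D_{11}$, so $D_{ij}=u_i+v_j$ with $u_i:=D_{i1}$ and $v_j:=D_{1j}+D_{11}$.

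Now put $A:=\{a_i : u_i=1\}\cup\{b_j : v_j=1\}$ and let $s$ be a switch with respect to $A$, realized as a composition of single-vertex switches by the remark following the definition of switch, so $s\in S_{\{l,r\}}(\Gamma)$. For each pair $(a_i,b_j)$ we have $|\{a_i,b_j\}\cap A|=u_i+v_j$, so by definition $s$ preserves the cross-type at $(a_i,b_j)$ precisely when $u_i+v_j\neq 1$, i.e.\ precisely when $D_{ij}=0$. Hence $s(X)$ with the cross-types inherited from $\Gamma$ is isomorphic as a bipartite graph to $\sigma(X)$ via the bijection $s(a_i)\mapsto \sigma(a_i)$, $s(b_j)\mapsto\sigma(b_j)$. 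Homogeneity of $\Gamma$ then extends this finite isomorphism to some $\phi\in Aut(\Gamma)$, and $g:=\phi\circ s\in S_{\{l,r\}}(\Gamma)$ satisfies $g\upharpoonright X=\sigma\upharpoonright X$, as required. The main obstacle is the matrix identity characterizing $D$ as $u_i+v_j$; this is exactly the shape realized by the $\mathbb{F}_2$-span of row and column single-vertex switches, and it is what forces the parity hypothesis to translate into an honest switch-plus-automorphism decomposition.
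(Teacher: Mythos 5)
Your proof is correct, but for the hard inclusion it takes a genuinely different route from the paper. The paper argues one vertex at a time: it first produces $\theta\in S_{\{l,r\}}(\Gamma)$ agreeing with $\sigma$ on a base $(2\times 2)$-subgraph $B$ (splitting into the cases of $0$, $2$, or $4$ flipped cross-types), then adjoins vertices $v$ one by one, using the parity hypothesis to show that $\theta^{-1}\circ\sigma$ restricted to $B\cup\{v\}$ is either an isomorphism or a switch at $v$, so that every finite restriction of $\sigma$ lies in $\mathfrak{F}(S_{\{l,r\}}(\Gamma))$. You instead solve each finite configuration in one shot: the discrepancy matrix $D$ between the cross-types on $X$ and on $\sigma(X)$ satisfies the $(2\times 2)$ parity identity, which over the two-element field forces $D_{ij}=u_i+v_j$, and that is precisely the discrepancy pattern of a single switch with respect to the finite set $A=\{a_i:u_i=1\}\cup\{b_j:v_j=1\}$; homogeneity then supplies the automorphism that finishes. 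Both arguments end by approximating on finite sets and invoking closedness, but yours buys a clean normal form (finite switch followed by an automorphism) and adapts verbatim to Lemmas \ref{1and2} and \ref{2and1}, where the $(1\times 2)$ (resp.\ $(2\times 1)$) condition forces $D_{ij}$ to depend only on the row (resp.\ column); the paper's vertex-by-vertex scheme is longer here but is the template reused later for the $(m\times n)$-analysis and the claims in Section 5. Two cosmetic points: the equation $|\{a_i,b_j\}\cap A|=u_i+v_j$ should be read mod $2$ (the cardinality is $2$ when both vertices lie in $A$), which is all your argument uses; and you should record the trivial fact that $T$ is a subgroup before deducing $S_{\{l,r\}}(\Gamma)\subseteq T$ from minimality of the generated closed subgroup.
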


\begin{proof}
It is easy to show that any $\sigma\in S_{\{l, r\}}(\Gamma)$ preserves the
parity of cross-types in every $(2\times 2)$-subgraph of
$\Gamma$. The other direction is proved as follows.

Suppose $\sigma\in Sym_{\{l, r\}}(\Gamma)$ preserves the parity of cross-types in every $(2\times 2)$-subgraph of
$\Gamma$.
Let B be an arbitrary $2\times 2$-subgraph of $\Gamma$. Since $\sigma$ preserves the parity of $P_i$'s for $i=l$ and $r$, only an even number of the cross-types can be changed. That is, $0$, $2$, or $4$ of the cross-types can be changed. We shall prove that in each case, there exists $\theta\in
S_{\{l, r\}}(\Gamma)$ such that $\theta\upharpoonright B=\sigma\upharpoonright B$.

Case 1: if none of the cross-types are changed, then there exists $\theta\in Aut(\Gamma)$ such that $\theta\upharpoonright B=\sigma\upharpoonright B$.

Case 2: if two of the cross-types are changed, then there exists $\theta$ which is either a switch w.r.t. one vertex or a switch w.r.t. two vertices of B such that  $\theta\upharpoonright B=\sigma\upharpoonright B$.

Case 3: if four of the cross-types are changed, then there exits $\theta$ which is a switch w.r.t. $R_l$ of $\Gamma$ (i.e. $\theta\in Aut(\Gamma)^*$) such that $\theta\upharpoonright B=\sigma\upharpoonright B$.

We then choose a vertex $v\in\Gamma\backslash {B}$ and let
$\phi=\theta^{-1}\circ \sigma\upharpoonright B\cup\{v\}$. We may assume $v\in R_l$. Note if
$E$ is a cross-edge in $B\cup\{v\}$ and $\phi$ does not preserve the cross-type on $E$, then $E=(v, u)$ for some $u\in R_r$. Also notice that $\theta$ and $\sigma$ both preserve the parity of
cross-types in $(2\times 2)$-subgraphs of $\Gamma$, hence so does $\phi$. Then it is easy to check that either for every $w\in B\cap R_r$, $P_i(v, w)\longrightarrow P_i(\phi(v), \phi(w))$; or for every $w\in B\cap R_r$, $P_i(v, w)\longrightarrow \neg P_i(\phi(v), \phi(w))$, where $i=1$ and $2$. Therefore $\phi\in\mathfrak{F}( S_{\{l,
r\}}(\Gamma))$, and so $\sigma\upharpoonright
B\cup\{v\}\in \mathfrak{F}(S_{\{l,
r\}}(\Gamma))$. Continuing in this manner for the vertices in $\Gamma\backslash {B\cup \{v\}}$, we see that for any finite bipartite graph
$S\subset\Gamma$, there exists an element $\theta_S\in \mathfrak{F}(S_{\{l,
r\}}(\Gamma))$ such that $\sigma\upharpoonright
S=\theta_S$. Thus $\sigma\in S_{\{l, r\}}(\Gamma)$, since $S_{\{l, r\}}(\Gamma)$ is
closed. This complete the proof of Lemma \ref{2and2}.
\end{proof}

Similarly, we can prove the following results.

\begin{lem}\label{1and2}
$S_{\{l\}}(\Gamma)=\{\sigma\in Sym_{\{l, r\}}(\Gamma)\mid\sigma$ preserves
the parity of cross-types in every $(1\times 2)$-subgraph of
$\Gamma\}$.
\end{lem}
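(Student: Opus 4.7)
The plan is to mirror the proof of Lemma~\ref{2and2}, replacing $(2\times 2)$ by $(1\times 2)$ and exploiting that the generators of $S_{\{l\}}(\Gamma)$ are automorphisms together with switches at single $R_l$-vertices. I would dispatch the easy direction first: automorphisms preserve all cross-types, and a switch at $u \in R_l$ acts on a $(1\times 2)$-subgraph with $R_l$-vertex $a$ and $R_r$-vertices $b_1, b_2$ by flipping both of $(a, b_1)$ and $(a, b_2)$ if $u = a$, and by fixing both otherwise. Parity is thus preserved on every generator, hence by closedness on all of $S_{\{l\}}(\Gamma)$.

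For the converse, fix $\sigma \in Sym_{\{l, r\}}(\Gamma)$ preserving this parity and a $(1\times 2)$-subgraph $B$ with $R_l$-vertex $a$ and $R_r$-vertices $b_1, b_2$. Parity forces $0$ or $2$ of the cross-types of $B$ to be flipped; the first case gives a partial isomorphism extending, by homogeneity, to an element of $Aut(\Gamma)$, while the second is realized, just as in Cases~2 and~3 of Lemma~\ref{2and2}, by a single-vertex switch at $\sigma(a) \in R_l$ built from the extension property $\Theta_n$. This furnishes $\theta \in S_{\{l\}}(\Gamma)$ with $\theta \upharpoonright B = \sigma \upharpoonright B$, and I would then extend one vertex at a time: assuming $\theta$ agrees with $\sigma$ on a finite $S \supseteq B$, pick $v \in \Gamma \setminus S$ and set $\phi = \theta^{-1} \circ \sigma \upharpoonright (S \cup \{v\})$, so that $\phi$ is the identity on $S$ and still preserves the $(1\times 2)$-parity.

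The decisive step is the case analysis on the side of $v$. If $v \in R_l$, every $(1\times 2)$-subgraph of $S \cup \{v\}$ that meets $v$ has $R_l$-vertex $v$ and two $R_r$-vertices $w_1, w_2 \in R_r \cap S$; parity forces the set $F = \{w \in R_r \cap S : P_i(v, w) \neq P_i(\phi(v), w)\}$ to contain or exclude any pair $\{w_1, w_2\}$ together, hence to be either empty or all of $R_r \cap S$. In the empty case $\phi$ is a partial isomorphism that extends via $Aut(\Gamma)$; in the full case, $\phi \upharpoonright (S \cup \{v\})$ is the restriction of a single-vertex switch at $v$ sending $v \mapsto \phi(v) \in R_l$ and fixing $S$, producible via $\Theta_n$, and hence lies in $\mathfrak{F}(S_{\{l\}}(\Gamma))$. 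If instead $v \in R_r$, every relevant subgraph has $R_l$-vertex $w \in R_l \cap S$ and $R_r$-vertices $v$ and $v' \in R_r \cap S$; the cross-edge $(w, v')$ sits entirely inside $S$ and is fixed by $\phi$, so parity forces $(w, v)$ to be fixed as well for every admissible $w$, making $\phi$ again a partial isomorphism. In every case $\phi \in \mathfrak{F}(S_{\{l\}}(\Gamma))$, so $\sigma \upharpoonright (S \cup \{v\}) \in \mathfrak{F}(S_{\{l\}}(\Gamma))$; iterating and invoking closedness of $S_{\{l\}}(\Gamma)$ yields $\sigma \in S_{\{l\}}(\Gamma)$. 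The main subtlety I anticipate is precisely the asymmetry of this split: for $v \in R_r$, a frozen cross-edge inside $S$ propagates the parity constraint so that no flipping at $v$ is permitted, which is exactly the behavior that separates $S_{\{l\}}(\Gamma)$ from $S_{\{r\}}(\Gamma)$; one must also verify that the "full flip" witness in the $v \in R_l$ case really extends to a bona fide single-vertex switch on $\Gamma$, which is where $\Theta_n$ is invoked.
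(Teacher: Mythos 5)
Your proposal is correct and follows essentially the route the paper intends: the paper gives no separate proof of Lemma~\ref{1and2}, stating only that it is proved ``similarly'' to Lemma~\ref{2and2}, and your adaptation (easy direction on generators, then the one-vertex-at-a-time extension with $\phi=\theta^{-1}\circ\sigma$ and closedness) is exactly that adaptation, with the key asymmetric case split for $v\in R_l$ versus $v\in R_r$ worked out correctly.
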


\begin{lem}\label{2and1}
$S_{\{r\}}(\Gamma)=\{\sigma\in Sym_{\{l, r\}}(\Gamma)\mid \sigma$ preserves
the parity of cross-types in every $(2\times 1)$-subgraph of
$\Gamma\}$.
\end{lem}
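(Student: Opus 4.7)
The plan is to mirror the proof of Lemma~\ref{2and2} throughout, with $(2\times 1)$-subgraphs in place of $(2\times 2)$-subgraphs and $R_r$-switches in place of general switches.

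For the forward inclusion, I would check that the generators of $S_{\{r\}}(\Gamma)$ preserve parity on every $(2\times 1)$-subgraph. Automorphisms preserve all cross-types; and a single switch $\sigma$ w.r.t. some $v\in R_r$ acts on a $(2\times 1)$-subgraph $B=\{a_1,a_2;u\}$ (with $a_i\in R_l$, $u\in R_r$) by flipping $0$ cross-types if $u\ne v$ and $2$ cross-types if $u=v$, so parity is preserved in either case. The set of $\sigma$ with this property is closed under composition and under pointwise limits (being the intersection of finite-character constraints), which, combined with $Aut(\Gamma)\le S_{\{r\}}(\Gamma)$, gives the inclusion.

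For the reverse inclusion, let $\sigma\in Sym_{\{l,r\}}(\Gamma)$ preserve parity on every $(2\times 1)$-subgraph. I would show that for every finite bipartite $S\subset\Gamma$ there is some $\theta_S\in\mathfrak{F}(S_{\{r\}}(\Gamma))$ agreeing with $\sigma\upharpoonright S$; closedness of $S_{\{r\}}(\Gamma)$ then finishes. The base case is a $(2\times 1)$-subgraph $B=\{a_1,a_2,u\}$, on which $\sigma$ flips $0$ or $2$ cross-types: in the first case homogeneity of $\Gamma$ supplies a $\theta\in Aut(\Gamma)$ matching $\sigma\upharpoonright B$; in the second, $\sigma\upharpoonright B$ is matched by an automorphism composed with a single switch w.r.t. $u\in R_r$, an element of $S_{\{r\}}(\Gamma)$.

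For the inductive step, assume $\theta\in S_{\{r\}}(\Gamma)$ agrees with $\sigma$ on $B$, pick $v\in\Gamma\setminus B$, and set $\phi=\theta^{-1}\circ\sigma\upharpoonright B\cup\{v\}$, so $\phi$ fixes $B$ pointwise and still preserves parity on $(2\times 1)$-subgraphs. If $v\in R_l$, then for any $a\in B\cap R_l$ and $w\in B\cap R_r$ the $(2\times 1)$-subgraph $\{v,a,w\}$ has its edge $(a,w)$ fixed by $\phi$, so parity forces $(v,w)$ to be preserved, and $\phi$ lifts to a finite restriction of an automorphism. If $v\in R_r$, then for any pair $a_1,a_2\in B\cap R_l$ the $(2\times 1)$-subgraph $\{a_1,a_2,v\}$ forces the flip-status of $(a_1,v)$ and $(a_2,v)$ to agree; propagating across all such pairs shows that either every cross-type $(a,v)$, $a\in B\cap R_l$, is preserved or every one is flipped. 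The former extends to an automorphism; the latter is realized by an automorphism composed with a switch w.r.t. $\{v\}\subset R_r$. In either case $\phi\in\mathfrak{F}(S_{\{r\}}(\Gamma))$, hence so is $\sigma\upharpoonright B\cup\{v\}$, and enumerating $\Gamma$ completes the induction.

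The main obstacle I anticipate is the $v\in R_r$ case of the inductive step: I need to argue that the parity constraint propagates uniformly across all pairs in $B\cap R_l$ (an ``all-or-none'' dichotomy rather than pair-by-pair) and then, in the flipped case, use homogeneity of $\Gamma$ to choose an image of $v$ realizing a genuine switch w.r.t. $v$ extending $\phi$. The asymmetric role of the two sides under an $R_r$-switch is what makes $(2\times 1)$-subgraphs, and not $(1\times 2)$-subgraphs, the correct test class here.
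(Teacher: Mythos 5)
Your proposal is correct and is exactly the adaptation the paper intends: the paper proves Lemma~\ref{2and2} in detail and then asserts Lemma~\ref{2and1} with the single word ``similarly,'' and your argument carries out that analogy faithfully (base case on a $(2\times 1)$-subgraph, one-vertex-at-a-time extension, the parity dichotomy forcing either an automorphism restriction or a switch w.r.t.\ the new $R_r$-vertex, then closedness of $S_{\{r\}}(\Gamma)$). Your closing remark correctly identifies why $(2\times 1)$- rather than $(1\times 2)$-subgraphs are the right test class for $S_{\{r\}}(\Gamma)$.
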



\section{The Strong Finite Bipartite Submodel Property}

In this section, we define the Strong Finite Bipartite Submodel Property (SFBSP), inspired by the Strong Finite Submodel Property introduced by Thomas in \cite{sT96}, and we prove that the random bipartite graph has the SFBSP. This property provides a powerful tool in the later sections of this paper.
\begin{defn}
A countable infinite bipartite graph $\Gamma$ has the
\textit{Strong Finite Bipartite Submodel Property (SFBSP)} if $\Gamma=\bigcup_{i\in\mathbb{N}} \Gamma_i$ is a union of an increasing chain of substructures $\Gamma_i$ such that
\renewcommand{\labelenumi}{(\arabic{enumi})}
\begin{enumerate}
\item $\Gamma_i\subset \Gamma_{i+1}$ and $|\Gamma_i|=i$ for each $i\in\mathbb{N}$. In particular,
\begin{itemize}
\item if $i$ is even, then $|\Gamma_{i}\cap R_l|=|\Gamma_{i}\cap R_r|$;
\item otherwise, $|\Gamma_{i}\cap R_l|=|\Gamma_{i}\cap R_r|+1$.
\end{itemize}
\item for any sentence $\phi$ with $\Gamma\models \phi$, there exists $N\in \mathbb{N}$ such that $\Gamma_i\models \phi$ for all $i\geq N$.
\end{enumerate}
\end{defn}

\begin{thm}\label{sfsp}
The countable random bipartite graph $\Gamma$ has the SFBSP.
\end{thm}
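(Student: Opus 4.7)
The plan is to mimic Thomas's probabilistic argument for the analogous statement about the random graph (\cite{sT96}), modified to respect the bipartition. First I would construct a random chain: set $\Gamma_0=\emptyset$, and at stage $i\geq 1$ add a single new vertex $v_i$, placed in $R_l$ when $i$ is odd and in $R_r$ when $i$ is even, so that the cardinality and parity requirements of clause (1) of the SFBSP hold automatically. For each vertex $u$ on the opposite side already present in $\Gamma_{i-1}$, flip an independent fair coin to decide whether the new cross-edge has type $P_1$ or $P_2$. The axioms of Definition \ref{bipartite} hold in each $\Gamma_i$ by construction, so only clause (2) requires work.

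Next I would prove the key estimate: for each fixed $n$, $\Pr(\Gamma_i\not\models\Theta_n)$ is summable in $i$. Given disjoint $X_{l1},X_{l2}\subseteq R_l\cap\Gamma_i$ with $|X_{lj}|\leq n$, the events ``$v$ witnesses the extension in part (a)'' for distinct $v\in R_r\cap\Gamma_i$ are independent, since they involve disjoint sets of coin flips, and each has probability at least $2^{-2n}$. Hence the probability that no such witness exists is at most $(1-2^{-2n})^{\lfloor i/2\rfloor}$; a union bound over the at most $i^{2n}$ relevant pairs $(X_{l1},X_{l2})$, together with the symmetric estimate for part (b), gives a bound on $\Pr(\Gamma_i\not\models\Theta_n)$ that decays exponentially in $i$. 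Summability in $i$ then delivers, via Borel--Cantelli, that almost surely $\Gamma_i\models\Theta_n$ for all but finitely many $i$.

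Taking the countable intersection over $n\in\mathbb{N}$, with probability one the chain eventually satisfies every $\Theta_n$. Since $Th(\Gamma)$ is complete and axiomatized by the bipartite axioms together with $\{\Theta_n\}_{n\in\mathbb{N}}$, each sentence $\phi$ with $\Gamma\models\phi$ is a consequence of finitely many of these axioms, so $\Gamma_i\models\phi$ eventually, almost surely. Fixing any realization in this probability-one event produces a deterministic chain meeting clause (2); its union is a countable random bipartite graph, hence by $\omega$-categoricity isomorphic to $\Gamma$, which allows the chain to be identified with an exhausting chain inside $\Gamma$ itself. The main technical obstacle is the union-bound/Borel--Cantelli estimate in the middle step; once that quantitative statement is in hand, the rest is bookkeeping and an appeal to completeness of the axiomatization.
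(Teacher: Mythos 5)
Your proposal is correct and follows essentially the same route as the paper: a fair-coin random bipartite chain, an exponential-decay estimate on the probability that an initial segment fails $\Theta_n$ (your $\bigl(1-2^{-2n}\bigr)^{\lfloor i/2\rfloor}$ with a polynomial union bound matches the paper's $\binom{m+1}{k}\binom{m+1-k}{k}\bigl(1-(\tfrac14)^k\bigr)^{m-2k}$ bound), Borel--Cantelli, and an appeal to completeness of the axiomatization plus $\omega$-categoricity to identify the union with $\Gamma$. If anything, your explicit countable intersection over all $n$ makes the ``one chain works for every sentence'' step slightly more transparent than the paper's presentation, which fixes a single $\phi$ and its corresponding $\Theta_k$.
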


Theorem \ref{sfsp} is a consequence of the Borel--Cantelli Lemma, as below:

\begin{defn}[\cite{sT96}]
If $\{A_n\}_{n\in \mathbb{N}}$ is a sequence of events in
a probability space, then $\bigcap_{n\in
\mathbb{N}}[\bigcup_{n\leq k\in \mathbb{N}}A_k]$ is the event that consists of realization of infinitely many of $A_n$, denoted by \textit{$\overline\lim A_n$}.
\end{defn}

\begin{lem}[Borel--Cantelli, \cite{pB79}]\label{bcl}
Let $\{A_n\}_{n\in \mathbb{N}}$ be a sequence of events
in a probability space. If $\sum_{n=0}^{\infty}{P(A_n)}<\infty$,
then $P(\overline\lim A_n)=0$.
\end{lem}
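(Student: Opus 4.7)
The plan is to run the standard two-line argument: bound $P(\overline{\lim}\, A_n)$ by the tail of a convergent series, and then let the tail go to zero. First I would unfold the definition: $\overline{\lim}\, A_n = \bigcap_{n \in \mathbb{N}} \bigcup_{k \geq n} A_k$, so for every fixed $n$ we have the set containment $\overline{\lim}\, A_n \subseteq \bigcup_{k \geq n} A_k$.

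Next, by monotonicity and countable subadditivity of the probability measure, this containment gives
\[
P\bigl(\overline{\lim}\, A_n\bigr) \;\leq\; P\Bigl(\bigcup_{k \geq n} A_k\Bigr) \;\leq\; \sum_{k=n}^{\infty} P(A_k).
\]
The hypothesis $\sum_{n=0}^{\infty} P(A_n) < \infty$ does the rest of the work: the tails of a convergent series tend to zero, so $\sum_{k=n}^{\infty} P(A_k) \to 0$ as $n \to \infty$. Since the left-hand side is independent of $n$, we conclude $P(\overline{\lim}\, A_n) = 0$.

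There is no serious obstacle in this proof; the only technical point worth mentioning is countable subadditivity, which is immediate from countable additivity of a probability measure by the usual disjointification $B_k = A_k \setminus (A_n \cup \cdots \cup A_{k-1})$ for $k \geq n$, giving $\bigcup_{k \geq n} A_k = \bigsqcup_{k \geq n} B_k$ with $P(B_k) \leq P(A_k)$. Beyond that, the argument is purely measure-theoretic bookkeeping and does not use any structure specific to the bipartite setting of the paper.
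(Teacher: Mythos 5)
Your proof is correct and is the standard argument for the first Borel--Cantelli lemma: bound $P(\overline{\lim}\,A_n)$ by the tail $\sum_{k\geq n}P(A_k)$ via monotonicity and countable subadditivity, then let $n\to\infty$. Note that the paper does not prove this statement at all --- it is quoted as a known result from the cited probability text and used as a black box in the proof of Theorem \ref{sfsp} --- so there is no in-paper argument to compare against; your write-up simply supplies the omitted (and entirely routine) justification.
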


\begin{proof}[Proof of Theorem \ref{sfsp}]
 Since the extension properties $\Theta_n$'s axiomatize the random bipartite graph $\Gamma$ and $\Theta_i$ implies $\Theta_{i-1}$ for all $i\in \mathbb{N}$, for every sentence $\phi$ true in $\Gamma$, there exists some $k\in \mathbb{N}$ such that $\Theta_k$ holds if and only if $\phi$ holds. Let $\Omega$ be the probability space of all countable bipartite graphs $(S, R_l, R_r, P_1, P_2)$, where $| R_l|=| R_r|=\omega$ and every cross-edge $E\in R_l\times R_r$ has cross-type $P_1$ with probability $\frac{1}{2}$. For each $n\in \mathbb{N}$ with $n\geq k$,  let $S_n\in[S]^n$ such that if $n$ is even, then $|S_n\cap R_l|=\frac{n}{2}$, otherwise $|S_n\cap R_l|=|S_n\cap R_r|+1$. Let $A_n$ be the event that the induced graph on $S_n$ does not satisfy the extension property $\Theta_k$. Then by simple computation,
\begin{equation}
\sum_{n=0}^{\infty}{P(A_n)}=\sum_{m=0}^{\infty}{P(A_{2m})}+\sum_{m=0}^{\infty}{P(A_{2m+1})}
\leq 4\sum_{m=0}^{\infty}{{{m+1}\choose{k}}{{m+1-k}\choose{k}}(1-(\frac{1}{4})^k)^{m-2k}}
\end{equation}
where ${{n}\choose{i}}$ is the number of combinations of $n$ objects taken $i$ at a time. Let $C_m={{{m+1}\choose{k}}{{m+1-k}\choose{k}}(1-(\frac{1}{4})^k)^{m-2k}}$. Then $\lim_{m\to +\infty}\frac{C_{m+1}}{C_m}=1-(\frac{1}{4})^k<1$. By the ratio test for infinite series, we have $\sum_{m=0}^{\infty}{C_m}$ converges, and so does $\sum_{n=0}^{\infty}{P(A_n)}$. Thus by Lemma \ref{bcl}, $P(\overline\lim A_n)=0$. So there exists a bipartite graph $S\in\Omega$ and an integer $N$ such that for all $n\geq N$, the subgraph on $S_n\in [S]^n$ satisfies the extension property $\Theta_k$, and so $\phi$. Notice that the choice of $S$ ensures that $S$ is countable and satisfies all the axioms for the random bipartite graph. Hence $S$ is isomorphic to $\Gamma$. Then $\Gamma$ has the SFBSP, which completes the proof of Theorem \ref{sfsp}.
\end{proof}

In the rest of the paper, we often use the fact that $\Gamma$ has the usual finite submodel property. We will only use the \textit{strong} finite bipartite submodel property in section $7$.


\section{$(m\times n)$-analysis}
In \cite{sT96}, Thomas used a helpful tool called "m-analysis" to classify the reducts of the random hypergraphs. Using a similar approach, we give the definition of $(m\times n)$-analysis in this section, and we prove that if $f\in \mathfrak{F}(S_{\{l, r\}}{(\Gamma)})$ and if $|dom{f}|$ is sufficiently large, then $f$ has an $(m\times n)$-analysis. This rather technical concept will be used in the proof of Theorem \ref{classification}.

\begin{defn}
Let $m, n>2$. Suppose $f\in \mathfrak{F}(S_{\{l, r\}}{(\Gamma)})$
and $Z=dom{f}$ satisfies $|Z\cap R_l|\geq{m}$ and $|Z\cap R_r|\geq{n}$. An \textit{$(m\times n)$-analysis}
of $f$ consists of a finite sequence of elements ${f_0, f_1, ...,
f_s}\in \mathfrak{F}(S_{\{l, r\}}{(\Gamma)})$ satisfying the
following conditions:

\renewcommand{\labelenumi}{(\arabic{enumi})}
\renewcommand{\labelenumii}{(\alph{enumii})}
\begin{enumerate}
\item $f_0=\theta\circ f$ where $\theta\in \mathfrak{F}(Aut(\Gamma)^*)$;
\item For each $0\leq{j}\leq{s-1}$, there exist a finite $(m\times n)$-subgraph
$Y_j$ in $Z$, and
an element $\theta_j\in S_{\{l, r\}}{(\Gamma)}$ such that
\begin{enumerate}
\item $\theta_j$ is either an automorphism, or a switch with respect to some vertex $v_j\in Y_j\cap R_{i_j}$ where $i_j\in\{l, r\}$;
\item $\theta_j\upharpoonright{Y_j}=(f_j\circ{f_{j-1}}\circ ...
\circ{f_0})\upharpoonright{Y_j}$;
\item $f_{j+1}=\theta_j^{-1}\upharpoonright{ran({f_j}\circ...\circ{f_0}})$;
\end{enumerate}
\item $f_s\circ ... \circ{f_0}: Z\longrightarrow{\Gamma}$ is an
isomorphic embedding.
\end{enumerate}
\end{defn}

We now prove the existence of an $(m\times n)$-analysis for a given $f$.

\begin{thm}\label{analysis}
Let $m, n\in \mathbb{N}$ and $m, n>2$. For every $f\in\mathfrak{F}(S_{\{l, r\}}{(\Gamma)})$, there exists an integer $s(m, n)$ such that if $|dom{f}\cap R_i|\geq s(m, n)$ for $i=l$ and $r$, then there exists an $(m\times n)$-analysis of $f$. 
\end{thm}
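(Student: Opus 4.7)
The approach is to analyze the ``switch pattern'' of $f$ and undo it one single-vertex switch at a time. Set $Z = \mathrm{dom}(f)$, $L = |Z \cap R_l|$, $R = |Z \cap R_r|$, and define $\epsilon_f : (Z \cap R_l) \times (Z \cap R_r) \to \{0,1\}$ by letting $\epsilon_f(a,b) = 1$ exactly when $f$ flips the cross-type of $(a,b)$. Since $f \in \mathfrak{F}(S_{\{l,r\}}(\Gamma))$, Lemma~\ref{2and2} implies that $\epsilon_f$ satisfies the $(2 \times 2)$-parity condition. A standard rank-one argument over $GF(2)$ then produces functions $\alpha : Z \cap R_l \to \{0,1\}$ and $\beta : Z \cap R_r \to \{0,1\}$ with $\epsilon_f(a,b) = \alpha(a) \oplus \beta(b)$, unique up to the involution $(\alpha,\beta) \mapsto (\alpha \oplus 1, \beta \oplus 1)$.

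The first step is to select $\theta \in \mathfrak{F}(Aut(\Gamma)^*)$ so that $f_0 = \theta \circ f$ admits a decomposition $(\alpha_0, \beta_0)$ with $|A_0| := |\alpha_0^{-1}(1)| \leq L/2$ and $|B_0| := |\beta_0^{-1}(1)| \leq R/2$. Taking $\theta$ to be (a restriction of) a global anti-isomorphism replaces $\epsilon_f$ by $\epsilon_f \oplus 1$, which together with the representative freedom above yields all four pairs $(|A_0|, |B_0|) \in \{|A|, L - |A|\} \times \{|B|, R - |B|\}$; taking the minimum in each coordinate realizes the desired bounds.

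The iterative step proceeds as follows. Assume inductively at stage $j$ that the composition $g_j := f_j \circ \cdots \circ f_0$ has switch-pattern decomposition $(\alpha_j, \beta_j)$ with $|A_j| \leq |A_0|$ and $|B_j| \leq |B_0|$, where $A_j := \alpha_j^{-1}(1)$ and $B_j := \beta_j^{-1}(1)$. If $A_j \neq \emptyset$, pick $v_j \in A_j$ and select an $(m \times n)$-subgraph $Y_j \subset Z$ with $v_j \in Y_j \cap R_l$, with the remaining $m - 1$ vertices of $Y_j \cap R_l$ drawn from $(Z \cap R_l) \setminus A_j$, and with $Y_j \cap R_r \subset (Z \cap R_r) \setminus B_j$. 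Such a $Y_j$ exists provided $L - |A_j| \geq m - 1$ and $R - |B_j| \geq n$, both of which follow from $L, R \geq 2\max(m,n)$ and the invariants on $|A_j|, |B_j|$. By construction, the restriction of the switch pattern of $g_j$ to $Y_j$ equals $\alpha_j(a) \oplus \beta_j(b) = \mathbf{1}_{\{v_j\}}(a)$, so a back-and-forth argument using the extension properties $\Theta_n$ of $\Gamma$ extends $g_j \upharpoonright Y_j$ to a global switch $\theta_j \in S_{\{l, r\}}(\Gamma)$ at $v_j$. Setting $f_{j+1} = \theta_j^{-1} \upharpoonright \mathrm{ran}(g_j)$, a direct calculation shows that the decomposition of $g_{j+1} = \theta_j^{-1} \circ g_j$ is $\alpha_{j+1}(a) = \alpha_j(a) \oplus \mathbf{1}_{\{v_j\}}(a)$, $\beta_{j+1} = \beta_j$, so $|A_{j+1}| = |A_j| - 1$. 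The symmetric construction handles the case $A_j = \emptyset$, $B_j \neq \emptyset$.

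After at most $|A_0| + |B_0| \leq (L + R)/2$ iterations the pattern becomes identically zero, so $g_s$ preserves every cross-type and is the desired isomorphic embedding; setting $s(m, n) = 2\max(m, n)$ completes the argument. I do not expect a serious obstacle: the whole argument is essentially bookkeeping once Lemma~\ref{2and2} supplies the rank-one decomposition of $\epsilon_f$. The step that most deserves care is verifying that the partial map $g_j \upharpoonright Y_j$ is genuinely consistent with a global single-vertex switch at $v_j$, so that the back-and-forth using $\Theta_n$ can extend it to $\theta_j$ with no obstruction; this is where the rank-one structure on $Y_j$ is essential.
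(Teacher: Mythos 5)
Your proof is correct, but it takes a genuinely different route from the paper's. The paper starts by applying Ramsey's theorem to the two-coloring of cross-edges by ``preserved/flipped,'' extracting a large $S\subseteq \mathrm{dom}(f)$ on which $f$ is an isomorphism or an anti-isomorphism, and then absorbs the vertices of $\mathrm{dom}(f)\setminus S$ one at a time, each contributing at most one single-vertex switch. You instead observe that the $(2\times 2)$-parity condition of Lemma~\ref{2and2} forces the flip-pattern to have the rank-one form $\epsilon_f(a,b)=\alpha(a)\oplus\beta(b)$ over $GF(2)$, normalize the supports to occupy at most half of each side, and peel off one switch per support vertex. The two iterations are morally the same (your $A_0\cup B_0$ plays the role of the paper's $Z\setminus S$, and your $\theta_j$'s are produced by the same back-and-forth the paper uses implicitly), but your opening move buys an explicit and much smaller bound $s(m,n)=2\max(m,n)$, where the Ramsey-based argument only gives a bound of bipartite-Ramsey size, and it makes the choice of $Y_j$ --- a block on which $g_j$ is exactly a single-vertex switch --- transparent rather than dependent on the homogeneity of $S$. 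One half-sentence worth adding: Lemma~\ref{2and2} is phrased in terms of the parity of the number of $P_1$-edges in a $(2\times 2)$-subgraph, so you should note that this parity is preserved if and only if the number of flipped cross-types is even (the change in the $P_1$-count is congruent mod $2$ to the number of flips); with that observation your rank-one decomposition, and hence the whole argument, goes through.
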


\begin{proof}
Let $f\in\mathfrak{F}(S_{\{l, r\}}{(\Gamma)})$ be such that $Z=dom f$ is a very large subset of $\Gamma$. By Ramsey's Theorem, there exists a large subset $S$ of $Z$ such that $S$ satisfies one of the following two conditions for every cross-edge $E$ in $S$, where $i=1, 2$:
\renewcommand{\labelenumi}{(\alph{enumi})}
\begin{enumerate}
\item $P_i(E)$ implies $P_i(f[E])$;
\item $P_i(E)$ implies $\neg P_i(f[E])$.
\end{enumerate}

We will construct a sequence of $f_i$'s as following.

 If $(a)$ holds, then we let $f_0=\theta\circ f$ where $\theta\in \mathfrak{F}(Aut(\Gamma)^*)$ is the identity map on $dom f$. Let $Y_0$ be an arbitrary $(m\times n)$-subgraph in $S$, and choose $\theta_0\in Aut(\Gamma)$ such that $\theta_0\upharpoonright S=f_0\upharpoonright S$. Define $f_1=\theta_0^{-1}\upharpoonright ran(f_0)$.

Next we choose $w_1\in Z\setminus S$ if it exists, and consider $f_1\circ f_0\upharpoonright S\cup\{w_1\}$. Since $f_1\circ f_0\in \mathfrak{F}(S_{\{l, r\}}{(\Gamma)})$ and $f_1\circ f_0\upharpoonright S$ is the identity map, $f_1\circ f_0\upharpoonright S\cup\{w_1\}$ is either an isomorphism or a switch with respect to $w_1$ by Lemma \ref{2and2}. Let $Y_1$ be an arbitrary $(m\times n)$-subgraph of $S\cup \{w_1\}$ containing $w_1$. Then there exists $\theta_1\in \mathfrak{F}(S_{\{l, r\}}{(\Gamma)})$ which is either an isomorphism or a switch with respect to $w_1$ and $\theta_1\upharpoonright S\cup\{w_1\}=f_1\circ f_0\upharpoonright S\cup\{w_1\}$. Define $f_2=\theta_1^{-1}\upharpoonright ran(f_1\circ f_0)$.

Continuing in this manner, for $0\leq j< s=|Z/S|$, we can find an $(m\times n)$-subgraph $Y_j$ of $Z$ and $\theta_j\in S_{\{l, r\}}{(\Gamma)}$ such that
\renewcommand{\labelenumi}{(\arabic{enumi})}
\begin{enumerate}
\item $\theta_j$ is either an isomorphism or a switch with respect to some vertex $w_j\in Y_j\cap R_{i_j}$ where $i_j\in\{l, r\}$;
\item $\theta_j\upharpoonright{Y_j}=(f_j\circ{f_{j-1}}\circ ...\circ{f_0})\upharpoonright{Y_j}$
\item $f_{j+1}=\theta_j^{-1}\upharpoonright{ran({f_j}\circ...\circ{f_0}})$;
\end{enumerate}
Also $f_s\circ ... \circ{f_0}: Z\longrightarrow{\Gamma}$ is an isomorphic embedding.

If $(b)$ holds, then there exists $\theta\in \mathfrak{F}(Aut(\Gamma)^*)$ with $dom (\theta)=ran (f)$, which exchanges all the cross-types on $\Gamma$. Let $f_0=\theta\circ f$. Hence $f_0\upharpoonright S$ is an isomorphism. The rest of the proof will be the same as in $(a)$.

Hence $f_0, f_1,\dots, f_{s}$ is an $(m\times n)$-analysis of $f$. This completes the proof of Theorem \ref{analysis}.

\end{proof}

\section{Closed Subgroups of $S_{\{l, r\}}(\Gamma)$ Containing $Aut(\Gamma)^*$}

In this section, we prove the first part of Theorem \ref{classification}, which says that the closed subgroups of $S_{\{l, r\}}(\Gamma)$ containing $Aut(\Gamma)^*$ are $Aut(\Gamma)^*$, $S_{\{l\}}(\Gamma)$, $S_{\{r\}}(\Gamma)$, and $S_{\{l, r\}}(\Gamma)$. Notice that in the rest of the paper, we only consider maps in $Sym_{\{l, r\}}(\Gamma)$. Hence from now on, we call $f\upharpoonright E$ is an isomorphism if $E=(a, b)$ is a cross-edge and $P_i (a, b)$ implies $P_i (f(a), f(b))$ for $i=1, 2$. We call $f\upharpoonright E$ is an anti-isomorphism if $E=(a, b)$ is a cross-edge and $P_i (a, b)$ implies $\neg P_i (f(a), f(b))$ for $i=1, 2$.

\begin{thm}\label{main1}
Suppose that $G$ is a closed subgroup with
$Aut(\Gamma)^*\leq{G}\leq{S_{\{l, r\}}(\Gamma)}$. Let $X$ be the
largest subset of $\{l, r\}$ such that
$S_{X}(\Gamma)^*\subseteq G$. Then $G\subseteq S_{X}(\Gamma)^*$, and so $G= S_{X}(\Gamma)^*$.
\end{thm}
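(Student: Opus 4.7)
The plan is to argue by contradiction via case analysis on $X \subseteq \{l, r\}$: assuming $G$ contains some $f \notin S_X(\Gamma)^*$, I will derive $S_{X'}(\Gamma)^* \leq G$ for some $X' \supsetneq X$, contradicting the maximality of $X$. The case $X = \{l, r\}$ is immediate since $G \leq S_{\{l, r\}}(\Gamma) = S_{\{l, r\}}(\Gamma)^*$.

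For $X = \{l\}$ (with $X = \{r\}$ symmetric), take any $f \in G \setminus S_{\{l\}}(\Gamma)$. Lemma \ref{1and2} furnishes a $(1 \times 2)$-subgraph on which $f$ fails parity preservation, which I embed into a large finite $Z \subseteq \Gamma$ with $|Z \cap R_i|$ exceeding the threshold $s(m,n)$ of Theorem \ref{analysis}. The $(m \times n)$-analysis applied to $f \upharpoonright Z$ decomposes the cross-type action of $f$ on $Z$---modulo an $Aut(\Gamma)^*$-factor and an isomorphic embedding---into a product of single-vertex switches with total switching set $V_l \sqcup V_r$; the parity failure forces $V_r$ to meet $Z \cap R_r$ in a proper nonempty subset. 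Since $\sigma_{V_l} \in S_{\{l\}}(\Gamma) \leq G$, composing $\sigma_{V_l}^{-1} \circ f$ with a suitable $Aut(\Gamma)^*$-correction (also in $G$) yields $g \in G$ whose action on $Z$ coincides with the pure $R_r$-switch $\sigma_{V_r}$.

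To isolate a single-vertex $R_r$-switch in $G$, I will use the extension property of $\Gamma$ to produce $\phi \in Aut(\Gamma) \leq G$ fixing $Z \setminus \{v_0\}$ pointwise (for a chosen $v_0 \in V_r$) and mapping $v_0$ to a fresh $v_0' \notin Z$ of the same cross-type relative to $Z \cap R_l$. Then $g \cdot (\phi g \phi^{-1})^{-1} \in G$ has effective switching set $V_r \triangle \phi(V_r) = \{v_0, v_0'\}$ on $Z$, and its restriction to any finite $Y \subseteq Z$ with $v_0 \in Y$, $v_0' \notin Y$, and $Y$ disjoint from $V_r \setminus \{v_0\}$ coincides with $\sigma_{v_0} \upharpoonright Y$. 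Closedness of $G$ together with homogeneity on $R_r$ then gives $\sigma_v \in G$ for every $v \in R_r$, hence $S_{\{r\}}(\Gamma) \leq G$, contradicting $X = \{l\}$.

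The case $X = \emptyset$ is handled analogously but without the direct cancellation step, since $\sigma_{V_l}$ is no longer available in $G$. For $f \in G \setminus Aut(\Gamma)^*$ with WLOG $f \notin S_{\{l\}}(\Gamma)$, the same analysis yields $V_l, V_r$; I will instead choose $\phi \in Aut(\Gamma) \leq G$ fixing $V_l \cup (V_r \setminus \{v_0\})$ pointwise and moving $v_0 \in V_r$ to a fresh $v_0'$, so that $f \cdot (\phi f \phi^{-1})^{-1} \in G$ has effective switching set $V_r \triangle \phi(V_r) = \{v_0, v_0'\}$ on $Z$ (the $V_l$-parts cancel under $\phi$-invariance), and the remaining extraction proceeds as before. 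The principal obstacle will be the careful bookkeeping of switching-set arithmetic modulo $Aut(\Gamma)^*$ under these conjugation-type compositions, and verifying that the extension property of $\Gamma$ consistently supplies automorphisms $\phi$ realizing the simultaneous fixing and moving conditions demanded by the finite sets extracted from the analysis.
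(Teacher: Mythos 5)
Your overall strategy---derive a contradiction with the maximality of $X$ by manufacturing a single-vertex switch on a side $i\notin X$ inside $G$---is genuinely different from the paper's. The paper never extracts a global switch from $f$; instead it builds a finite ``detector'' subgraph $H$ (Lemma \ref{H}) with the property that if any $g\in G$ restricts to a switch w.r.t.\ a vertex of $H\cap R_i$ then $i\in X$, and then uses the Abramson--Harrington form of the Ne\v{s}et\v{r}il--R\"{o}dl theorem (via the $F_g$-homogeneous set and Claim A) to transfer the switches produced by the $(m\times n)$-analysis onto a copy of $H$. Your reading of the $(m\times n)$-analysis itself is essentially correct: on the finite set $Z$ the cross-type action of $f$ is that of $\rho^{\epsilon}\sigma_{V_l\sqcup V_r}$ for some $V_l\subseteq Z\cap R_l$, $V_r\subseteq Z\cap R_r$, and the parity failure on a $(1\times 2)$-subgraph does force $V_r\neq\emptyset$.

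The gap is in the cancellation/commutator step. The identity ``$g\cdot(\phi g\phi^{-1})^{-1}$ has effective switching set $V_r\triangle\phi(V_r)$'' is a computation valid in a group of genuine switches, but $g$ is not a switch: its cross-type behavior is known only on the finite set $Z$ (and that of $g^{-1}$ only on $g[Z]$). When you evaluate $g\circ(\phi g\phi^{-1})^{-1}$ on a cross-edge $E$ of $Z$, the intermediate image $\phi g^{-1}\phi^{-1}[E]$ has no reason to lie in $Z$, so you have no information about what $g$ then does to its cross-type. The same problem already occurs at the earlier step $\sigma_{V_l}^{-1}\circ f$: the map $\sigma_{V_l}^{-1}$ flips the types of edges meeting $\sigma_{V_l}[V_l]$, not $V_l$, and $f[E]$ need not meet that set in the right way; note also that no switch w.r.t.\ a nonempty set can fix $Z$ pointwise, so you cannot arrange for $\sigma_{V_l}$ to leave $Z$ alone. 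This is precisely the difficulty the paper's proof is organized to avoid: it only composes with elements $\psi\in S_X(\Gamma)^*$ chosen to agree with the given map \emph{pointwise as functions} on the relevant finite set (so that $\psi^{-1}\circ h$ is literally the identity there), and it replaces the extraction of a global single-vertex switch by the finite detector of Lemma \ref{H} together with the Ramsey-type homogenization. Without an analogue of Lemma \ref{H}, or some mechanism for controlling the images of $Z$ under all the intermediate maps, the conjugation argument as written does not go through.
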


In the rest of this section, we let $G$ be a closed subgroup with $Aut(\Gamma)^*\leq{G}\leq{S_{\{l, r\}}(\Gamma)}$, and $X$ be the largest subset of $\{l, r\}$ such that $S_{X}(\Gamma)^*\subseteq G$.

\begin{lem}\label{5.2}
Suppose that $g\in G$ is a bijection such that
  for every finite $T \subseteq \Gamma$ with $|T\cap R_i|\geq 2$ for $i=l$ and $r$,  we have $g\upharpoonright T\in \mathfrak{F}(S_X(\Gamma)^*)$. Then $g\in S_X(\Gamma)^*$.
\end{lem}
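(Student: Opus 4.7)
The plan is to unwind what it means for $g$ to lie in the closed group $S_X(\Gamma)^*$: since $S_X(\Gamma)^*$ is closed in the pointwise convergence topology on $Sym_{\{l,r\}}(\Gamma)$, we have $g \in S_X(\Gamma)^*$ if and only if $g \upharpoonright T \in \mathfrak{F}(S_X(\Gamma)^*)$ for \emph{every} finite $T \subseteq \Gamma$. The hypothesis already supplies this for every finite $T$ meeting each side in at least two vertices, so the entire task reduces to verifying the property for the remaining ``small'' finite sets, namely those $T \subseteq \Gamma$ with $|T \cap R_l| \le 1$ or $|T \cap R_r| \le 1$.

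For any such $T$, my plan is simply to enlarge. Since $R_l$ and $R_r$ are both infinite, I can choose a finite $T' \supseteq T$ inside $\Gamma$ with $|T' \cap R_i| \geq 2$ for $i = l, r$. Applying the hypothesis to $T'$ produces some $h \in S_X(\Gamma)^*$ with $h \upharpoonright T' = g \upharpoonright T'$, and restricting once more yields $h \upharpoonright T = g \upharpoonright T$. Hence $g \upharpoonright T \in \mathfrak{F}(S_X(\Gamma)^*)$ for every finite $T$, and closedness of $S_X(\Gamma)^*$ delivers $g \in S_X(\Gamma)^*$.

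There is no real obstacle here: the lemma is essentially a statement that the topological closure of $S_X(\Gamma)^*$ is insensitive to finitely many ``small'' restrictions, since any such restriction extends to a larger one where the hypothesis applies. The substantive work of Theorem \ref{main1} will lie elsewhere (presumably in verifying the hypothesis of this lemma by means of the $(m \times n)$-analysis of Theorem \ref{analysis} together with the characterizations of $S_X(\Gamma)$ by parity preservation from Lemmas \ref{2and2}, \ref{1and2}, \ref{2and1}); the present lemma just packages the transition from ``sufficiently large finite restrictions'' to ``all finite restrictions'' so that later arguments need not repeatedly worry about domains with only one vertex on a side.
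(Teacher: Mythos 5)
Your proof is correct, but it takes a genuinely different route from the paper's. You treat $S_X(\Gamma)^*$ abstractly as a closed subgroup of $Sym_{\{l, r\}}(\Gamma)$, note that membership in a closed subgroup is equivalent to having every finite restriction in $\mathfrak{F}(S_X(\Gamma)^*)$, and then dispose of the ``small'' finite sets by the harmless enlargement step: any finite $T$ meeting a side in fewer than two vertices sits inside some $T'$ with $|T'\cap R_i|\geq 2$ for $i=l,r$, and restricting the witness for $T'$ gives a witness for $T$. The paper instead argues through the concrete descriptions of the switch groups: for $X\neq\emptyset$ it invokes Lemmas \ref{2and2}, \ref{1and2} and \ref{2and1}, which identify $S_X(\Gamma)$ with the permutations preserving the parity of cross-types on $(2\times 2)$-, $(1\times 2)$- or $(2\times 1)$-subgraphs --- local conditions that are already visible inside the sets $T$ covered by the hypothesis --- and for $X=\emptyset$ it uses $Aut(\Gamma)^*=S_{\{l\}}(\Gamma)\cap S_{\{r\}}(\Gamma)$ to reduce to the previous cases. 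Your argument is more elementary and more general: it uses nothing about $S_X(\Gamma)^*$ beyond closedness, needs no case split on $X$, and would apply verbatim to any closed subgroup in its place. What the paper's version buys is an explicit link between the lemma and the parity characterizations that drive the rest of Section 5; what yours buys is brevity and robustness. Both are valid.
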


\begin{proof}
If $X\neq\emptyset$, from Lemma \ref{2and2}, Lemma \ref{1and2} and Lemma \ref{2and1}, we know that $g\upharpoonright T\in \mathfrak{F}(S_X(\Gamma))$ implies $g\in S_X(\Gamma)$. Then we are done. If $X=\emptyset$, then $S_{\emptyset}(\Gamma)^*=Aut(\Gamma)^*$. If $g\upharpoonright T\in \mathfrak{F}(Aut(\Gamma)^*)$, then $Aut(\Gamma)^*=S_{\{l\}}(\Gamma)\cap S_{\{r\}}(\Gamma)$ implies $g\upharpoonright T\in \mathfrak{F}(S_l(\Gamma))$ and $g\upharpoonright T\in \mathfrak{F}(S_r(\Gamma))$. Thus $g\in S_{\{l\}}(\Gamma)\cap S_{\{r\}}(\Gamma)$, and so $g\in Aut(\Gamma)^*$. This completes the proof of Lemma \ref{5.2}.
\end{proof}

Now let $g\in G$. Let $T\subseteq \Gamma$ be an arbitrary finite bipartite graph with $|T\cap R_i|\geq 2$ for $i=l$ and $r$. Then it will be sufficient to show that $g\upharpoonright T\in \mathfrak{F}(S_{X}(\Gamma)^*)$. To achieve this, we adjust $g$ repeatedly via composition with elements of $S_X(\Gamma)^*$ until we eventually obtain an element $h\in \mathfrak{F}(S_{\{l, r\}}(\Gamma))$ such that $h \upharpoonright T$ is an isomorphism. Our strategy is based upon the following lemma.

\begin{lem}\label{5.3}
Suppose $h\in \mathfrak{F}(S_{\{l, r\}}(\Gamma))$, and $U$, $T\subset dom (h)$ are two disjoint bipartite subgraphs such that for every cross-edge $E$ in $(T\cup U) \backslash T$, $h\upharpoonright E$ is an isomorphism. Then $h\upharpoonright T$ is an isomorphism.
\end{lem}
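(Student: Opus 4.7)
The plan is to reduce the lemma to a short parity argument powered by Lemma~\ref{2and2}, which characterizes $\mathfrak{F}(S_{\{l,r\}}(\Gamma))$ as exactly those partial maps that preserve the parity of cross-types in every $(2\times 2)$-subgraph of their domain. Since $h\in \mathfrak{F}(S_{\{l,r\}}(\Gamma))$, this parity-preservation is available on every $(2\times 2)$-subgraph I can build inside $dom(h)$. My goal will be to upgrade the hypothesis ``$h$ is an isomorphism on every cross-edge touching $U$'' to the conclusion ``$h$ is an isomorphism on every cross-edge entirely inside $T$'' by exhibiting a four-vertex witness drawn from $T\cup U$ in which three of the four cross-types are already known to be preserved.

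Concretely, I would fix an arbitrary cross-edge $(a,b)$ with $a\in T\cap R_l$ and $b\in T\cap R_r$, and aim to show $h$ preserves its cross-type. Picking a vertex $u\in U\cap R_l$ and a vertex $v\in U\cap R_r$, form the $(2\times 2)$-subgraph $B$ on $\{a,u\}\cup\{b,v\}$ inside $dom(h)$. Its four cross-edges are $(a,b),(a,v),(u,b),(u,v)$; the last three each have at least one endpoint in $U$, hence lie in $(T\cup U)\setminus T$, so by hypothesis $h$ is an isomorphism on each. Because $h$ preserves the parity of cross-types on $B$ and already preserves the cross-types on three of its four cross-edges, it has no choice but to preserve the cross-type of the remaining edge $(a,b)$. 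Since $(a,b)$ was an arbitrary cross-edge of $T$, this shows $h\upharpoonright T$ is an isomorphism.

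The only place where the argument has a real chance to be awkward is the selection of $u\in U\cap R_l$ and $v\in U\cap R_r$, which requires $U$ to meet both sides. This appears to be built into the paper's standing convention that ``bipartite subgraph'' has vertices on both sides (compare Definition~\ref{bipartite} and the SFBSP setup), so in context the choice is immediate. If one did want to accommodate a degenerate $U$ sitting entirely on one side, one would substitute, for the missing opposite-side vertex, a second vertex of $T$ on that side; the same parity argument then shows that preservation of the cross-type on $(a,b)$ is equivalent to preservation on a ``cousin'' cross-edge inside $T$, and iterating together with any single touching edge in $U$ recovers the conclusion. That bookkeeping aside, the combinatorial core of the proof is the clean three-out-of-four parity step.
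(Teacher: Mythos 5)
Your proof is correct and is essentially the paper's own argument: the paper also forms a $(2\times 2)$-subgraph $W$ with $W\cap T$ equal to the given cross-edge and two vertices drawn from $U$, observes that $h$ preserves the cross-types of the other three cross-edges by hypothesis, and invokes the parity characterization of Lemma~\ref{2and2} (the paper phrases it as a contradiction, you phrase it directly, but the three-out-of-four parity step is identical). Your closing remark about $U$ possibly missing one side is a reasonable extra caution, though the paper, like you, simply assumes the needed vertices of $U$ are available.
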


\begin{proof}
We prove this by contradiction. Suppose $h\upharpoonright T$ is not an isomorphism, then there exists a cross-edge $A \in [T]^2$ such that $h\upharpoonright A$ is not
an isomorphism. Let $W$ be a $(2\times 2)$-subgraph of $T\cup U$ such that $W\cap T=A$. By assumption, $h\upharpoonright E$ is an isomorphism for every cross-edge $E\in [W]^2 \backslash A$. Thus $h$ does not preserve the parity of the cross-types on the $(2\times 2)$-subgraph $W$, which contradicts Lemma \ref{2and2}. This completes the proof of Lemma \ref{5.3}.
\end{proof}

We shall make use of the following property of $X$.
\begin{lem}\label{H}
Let X be the largest subset of $\{l, r\}$ such that
$S_{X}(\Gamma)^*\subseteq G$. There exists a finite bipartite subgraph $H$ of $\Gamma$ satisfying:

For any $i\in{\{l, r\}}$, if there exist some vertex $v_i\in H\cap R_i$ and $g\in G$ such that $g\upharpoonright H$ is a switch w.r.t $v_i$, then $i\in X$.
\end{lem}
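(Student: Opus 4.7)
The plan is to argue by contradiction, leveraging the maximality of $X$. Suppose to the contrary that no such $H$ exists, so that for every finite bipartite subgraph $H\subseteq\Gamma$ there exist $i\in\{l,r\}\setminus X$, $v\in H\cap R_i$, and $g\in G$ with $g\upharpoonright H$ a switch with respect to $v$. (The statement is vacuous if $X=\{l,r\}$.) First I would show, by a simple dichotomy, that there is some fixed $i^*\in\{l,r\}\setminus X$ for which this occurs with $i=i^*$ on a cofinal family of finite subgraphs of $\Gamma$: otherwise one could glue finite witnesses $K_l,K_r$ of the failure of cofinality into a single finite subgraph $K_l\cup K_r$ whose hypothesized bad vertex could not lie in either side, a contradiction. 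Fix a vertex $v^*\in R_{i^*}$.

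The core step is to show that for every finite $H\ni v^*$ there exists $g'\in G$ whose restriction to $H$ is a switch with respect to $v^*$. Given such $H$, I would choose $H'\supseteq H$ large enough that (a) by cofinality from the previous step, $H'$ witnesses the contradiction hypothesis with $i=i^*$, yielding some $v\in H'\cap R_{i^*}$ and $g\in G$ with $g\upharpoonright H'$ a switch with respect to $v$; and (b) $H'$ satisfies a sufficiently strong extension property $\Theta_k$, which is a first-order consequence of the axioms of $\Gamma$ and so holds in arbitrarily rich finite subgraphs by the finite submodel property, so that for any vertex of $H'\cap R_{i^*}$ there is an embedding of $H$ into $H'$ carrying $v^*$ to that vertex. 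Applying (b) with the $v$ from (a) and extending the resulting embedding to some $\alpha\in Aut(\Gamma)$, a short computation using that $\alpha$ is an automorphism shows that $g':=g\circ\alpha$ lies in $G$ and restricts on $H$ to a switch with respect to $v^*$.

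Finally, pick any global switch $\sigma\in Sym_{\{l,r\}}(\Gamma)$ with respect to $v^*$. The key observation is that any two switches with respect to the same vertex differ by an automorphism of $\Gamma$ (a direct check from the definition), so post-composing the $g'$ above with a suitable element of $Aut(\Gamma)\subseteq G$ produces, for every finite $H\ni v^*$, an element of $G$ whose restriction to $H$ equals $\sigma\upharpoonright H$. By the closedness of $G$ this forces $\sigma\in G$, and conjugating $\sigma$ by automorphisms yields switches with respect to every vertex of $R_{i^*}$; hence $S_{X\cup\{i^*\}}(\Gamma)^*\subseteq G$, contradicting the maximality of $X$. The main obstacle is item (b) above: I must invoke the extension properties of $\Gamma$ (transferred to finite subgraphs via the finite submodel property) in just the right way so that $H'$ realizes an embedding of $H$ with $v^*\mapsto v$ for the specific $v$ that the hypothesis hands us, not merely for some cooperative choice.
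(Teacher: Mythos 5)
Your overall strategy (assume the conclusion fails on every finite subgraph, reassemble a global switch inside $G$ by closedness, and contradict the maximality of $X$) is a legitimate contrapositive of what the paper actually does, and several of your ingredients are sound: the dichotomy isolating $i^*$, the observation that two switches with respect to the same vertex differ by an automorphism, and the final closedness/conjugation step. But there is a genuine gap at the core step, exactly at the point you yourself flag as ``the main obstacle.'' You need a single finite graph $H'$ that simultaneously (a) admits a witness $(v,g)$ with $v\in H'\cap R_{i^*}$ and $g\upharpoonright H'$ a switch with respect to $v$, and (b) is rich enough that \emph{every} vertex of $H'\cap R_{i^*}$ carries an embedded copy of $(H,v^*)$ inside $H'$. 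Property (a) comes from your cofinality claim, which only asserts that \emph{some} superset of any given finite set admits a witness; property (b) comes from the finite submodel property, which likewise only produces \emph{some} rich superset. These are two cofinal families of finite subgraphs, and nothing guarantees they intersect: ``choose $H'$ large enough'' does not work, because neither family is upward closed and neither contains all sufficiently large finite subgraphs. If instead you take $H'\supseteq K$ with $K$ rich, the witness vertex $v$ handed to you by the hypothesis may lie in $H'\setminus K$, where you have no control over its neighborhood inside $H'$; and you cannot import a copy of $(H,v^*)$ at $v$ from outside $H'$ via homogeneity of $\Gamma$, because $g$ is only known to behave like a switch on $H'$ itself.

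The repair is to reverse the order of your two reductions: apply the contradiction hypothesis \emph{directly} to a finite graph $K$ already chosen (via the finite submodel property) so that, for each $i\notin X$, every vertex of $K\cap R_i$ carries an embedded copy of the relevant pointed graph inside $K$. This forces the witness vertex into the controlled region, at the cost of not knowing in advance which side $i$ it lies on, so the pigeonhole on $i$ must then be performed at the level of pointed graphs $(H,v^*_i)$ rather than at the level of ambient graphs, using that the relevant property passes to pointed subgraphs. The paper sidesteps all of this bookkeeping by arguing the contrapositive head-on: maximality of $X$ together with closedness of $G$ yields, for each $i\notin X$, a fixed finite configuration $A$ with distinguished vertex $a_i$ on which no element of $G$ restricts to a switch with respect to $a_i$; homogeneity then spreads this obstruction to every vertex of $R_i$, and the finite submodel property packages copies of these obstructions around every vertex into a single finite $H$. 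Fixing the distinguished vertex from the start is what makes the quantifiers cooperate there.
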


\begin{proof}
We prove the equivalent statement: there exists a finite bipartite subgraph $H$ of $\Gamma$ satisfying: if $i\in\{l, r\}$ and $i\notin X$, then for every $v_i\in H\cap R_i$ and every $g\in G$, $g\upharpoonright H$ is not a switch w.r.t $v_i$.

Since $i\in\{l, r\}$ and $i\notin X$, there exists a map $f$ which is a switch with respect to some vertex $a_i\in R_i$, but not in $G$. Otherwise the closed group generated by $Aut(\Gamma)$ and $f$ is $S_{\{i\}}(\Gamma)$, and so $S_{\{i\}}(\Gamma)=S_{\{i\}}(\Gamma)^*$ is a subgroup of $G$, a contradiction with the definition of $X$. Then $f\notin G$ implies that for every $g\in G$, $g$ is not a switch with respect to $a_i$. So there exists a finite set $A\subseteq \Gamma$ containing $a_i$ such that for every $g\in G$, $g\upharpoonright A$ is not a switch with respect to $a_i$

Since $\Gamma$ has the extension property, we have the following holds:

For every vertex $v_i \in R_i$, there exists a bipartite graph $A'\subseteq\Gamma$ containing $v_i$ which is isomorphic to $A$ mapping $v$ to $a_i$. This can be expressed by the first-order sentence $\sigma_i$.
If $\sigma$ is the sentence $\wedge_{i\notin X}{\sigma_i}$, then
$\Gamma\models\sigma$. Hence by Theorem \ref{sfsp}, there exists a finite
bipartite $H$ of $\Gamma$ such that
$H\models\sigma$. This $H$ satisfies our requirement, which completes the proof of Lemma \ref{H}.
\end{proof}

We shall also make use of a combinatorial theorem of Ne\v{s}et\v{r}il--R\"{o}dl, which is a generalization of Ramsey's Theorem. The following formulation, convenient for our use, is due to Abramson and Harrington (\cite{fA78}).

\begin{defn}[See \cite{sT96}]
A \textit{system of colors of length $n$}, $\alpha=(\alpha_1, ...,
\alpha_n)$ is an $n$-sequence of finite nonempty sets. An
\textit{$\alpha$-colored set} consists of a finite ordered set $X$
and a function $\tau: [X]^{\leq n}\longrightarrow
\alpha_1\cup...\cup\alpha_n$ such that $\tau(A)\in\alpha_k$ for each
$A\in[X]^k$ where $1\leq k\leq n$. For each $A\in[X]^{\leq n}$,
$\tau(A)$ is called the \textit{color} of $A$. An \textit{$\alpha$-pattern }is an $\alpha$-colored set whose
underlying ordered set is an integer.
\end{defn}

\begin{thm}[Abramson--Harrington \cite{fA78}]\label{NN}
Given $n$, $e$, $M\in\mathbb{N}$, a system $\alpha$ of colors of
length $n$ and an $\alpha$-pattern $P$, there exists an
$\alpha$-pattern $Q$ with the following property. For any
$\alpha$-colored set $(X, \tau)$ with $\alpha$-pattern $Q$ and for
any function $F: [X]^e\longrightarrow M$, there exists $Y\subseteq
X$ such that $(Y, \tau\upharpoonright Y)$ has $\alpha$-pattern $P$
and such that for any $A\in[Y]^e$, $F(A)$ depends only on the
$\alpha$-pattern of $(A, \tau\upharpoonright A)$. (We say that such $Y$
is $F$-homogeneous).
\end{thm}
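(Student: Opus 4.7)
The plan is to prove this structural Ramsey theorem via the partite amalgamation method of Ne\v{s}et\v{r}il and R\"{o}dl, combined with iterated applications of the finite Ramsey theorem. (Abramson and Harrington originally used a parameter-word argument akin to Graham--Rothschild, but the partite construction is cleaner to sketch in outline.) Throughout, the linear order on the underlying set of each $\alpha$-colored set is preserved, which makes the amalgamation step considerably easier than in the unordered case.

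First I would reduce to the uncolored case in which $|\alpha_k|=1$ for every $k$: given a solution in the uncolored regime, the general case follows by first applying the ordinary finite Ramsey theorem to stabilize the $\tau$-coloring of $e$-subsets, then running the main argument inside a $\tau$-homogeneous sub-pattern. Next I would induct on the arity $e$. The base case $e=1$ is pigeonhole after grouping the elements of $X$ by their $\alpha_1$-color. For the inductive step, I would build $Q$ as an amalgamation of copies of $P$ over a smaller base pattern produced by the inductive hypothesis for $(e-1)$-subsets, handling one possible $\alpha$-pattern of an $e$-subset at a time: at each stage the fiber width is chosen large enough that any $M$-coloring of the newly introduced $e$-subsets contains a $P$-shaped sub-configuration monochromatic on the $\alpha$-pattern currently being processed.

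The main obstacle is the simultaneous bookkeeping in the partite construction. One must control, at once, the finitely many possible $\alpha$-patterns of $e$-subsets that occur in $P$, the $M$-valued colorings restricted to each such pattern, and the requirement that the final $Q$ contain a copy of $P$ on which $F$ depends only on the $\alpha$-pattern of the underlying subset. The standard remedy is to iterate the construction once per $\alpha$-pattern of $e$-subsets, using at each stage a Ramsey object large enough to absorb the current pattern without disturbing the patterns already stabilized in earlier stages; the desired $F$-homogeneous copy of $P$ is then read off from the final layer of the amalgam. A purely alternative route, should the partite bookkeeping prove cumbersome, is a compactness argument deriving the finite statement from the infinite canonical Ramsey theorem on $\omega$, but the partite approach gives explicit bounds and seems more in the spirit of the application in \textsection 7.
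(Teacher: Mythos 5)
First, a point of calibration: the paper does not prove Theorem \ref{NN} at all. It is quoted from Abramson and Harrington \cite{fA78} as a known black box (the Ne\v{s}et\v{r}il--R\"{o}dl theorem in the Abramson--Harrington formulation), so there is no proof in the paper to compare yours against; what you have written is an attempt to reprove a deep imported result. Your overall architecture --- ordered partite amalgamation, induction on the arity $e$, and stabilizing one $\alpha$-pattern of $e$-subsets per stage --- is indeed the standard shape of the Ne\v{s}et\v{r}il--R\"{o}dl argument, and the observation that the ambient linear order makes the amalgamation step tractable is the right one.

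However, your opening reduction is wrong, and not repairably so. You propose to reduce to the case $|\alpha_k|=1$ by ``applying the ordinary finite Ramsey theorem to stabilize the $\tau$-coloring of $e$-subsets, then running the main argument inside a $\tau$-homogeneous sub-pattern.'' But the conclusion of the theorem requires producing $Y\subseteq X$ whose $\alpha$-pattern is the \emph{prescribed} pattern $P$, and $P$ in general assigns different colors to different subsets (in the paper's applications, $P$ encodes a specific bipartite graph together with its attachment to $T$, so distinct pairs genuinely carry distinct colors). Once you pass to a $\tau$-homogeneous subset, every copy of $P$ has been destroyed unless $P$ was constant to begin with. The entire content of the theorem --- and the reason it needs the partite construction rather than classical Ramsey --- is precisely that one must find an $F$-homogeneous copy of a prescribed \emph{inhomogeneous} colored structure; the $|\alpha_k|=1$ case is essentially the finite Ramsey theorem, and the general case does not follow from it by homogenization. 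Separately, the inductive step asserts rather than proves the partite lemma: ``the fiber width is chosen large enough that any $M$-coloring of the newly introduced $e$-subsets contains a $P$-shaped sub-configuration monochromatic on the pattern currently being processed'' is exactly the statement that requires a Hales--Jewett-type argument, and without it the sketch has no engine. Either cite \cite{fA78} as the paper does, or commit to the full partite construction with the partite lemma actually proved.
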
%

\begin{proof}[Proof of Theorem \ref{main1}]
Let $X$ be the
largest subset of $\{l, r\}$ such that
$S_{X}(\Gamma)^*\subseteq G$. Suppose $g\in G$, and let $T\subseteq \Gamma$ with $|T\cap R_l|>2$ and
$|T\cap R_r|>2$. By Lemma \ref{5.2}, it is
enough to show now that $g\upharpoonright T\in \mathfrak{F}(S_{X}(\Gamma)^*)$. The proof of Theorem \ref{main1} proceeds via a sequence of claims.

Fix an ordering $\prec$ of vertices in $\Gamma$ such that $T$ is an initial segment of this ordering of $\Gamma$. For a
suitable system of colors $\alpha$, we define
an $\alpha$-coloring $\tau$ of $[\Gamma\backslash T]^{i\leq 2}$ by setting:

$\tau(A)=\tau(B)$ if and only if $|A|=|B|$ and the order-preserving bijection
$T\cup A\longrightarrow T\cup B$ is an isomorphism.

Now we define the partition function $F_g: [\Gamma\backslash T]^2\longrightarrow 2$ such that for $E\in [\Gamma\backslash T]^2$,
\begin{itemize}
\item $F_g(E)=1$ if $E\in [R_i]^2$ for $i=1, 2$; or if $E\in R_l\times R_r$ with $g\upharpoonright E$ is an isomorphism.
\item $F_g(E)=0$, otherwise.
\end{itemize}

Let $H$ be the finite bipartite graph given by Lemma \ref{H} and let
$m=|H\cap R_l|$, $n=|H\cap R_r|$. Since $\Gamma$ satisfies the extension properties, the following conditions hold.
\renewcommand{\labelenumi}{(\alph{enumi})}
\begin{enumerate}
\item $|\Gamma\cap R_i|\geq s(m,n)+|T|$ for $i=l$ and $r$, where $s(m, n)$ as in Lemma \ref{analysis};
\item $\Gamma$ contains all different copies of $(2\times 2)$-graphs, each connecting to $T$ in all possible ways; 
\item $\Gamma$ contains isomorphic copies of $(m\times n)$-subgraph $H$ connecting to $T$ in all possible ways.

\item For every $v\in T$, there exists a finite bipartite subgraph $V\subseteq (\Gamma\backslash T)\cup \{v\}$ containing $v$ such that $V$ is isomorphic to the $(m\times n)$-subgraph $H$. 
\end{enumerate}
These can be expressed as a first-order sentence $\sigma$. Since $\Gamma$ has SFBSP, there exists a finite subgraph $U\subset \Gamma\backslash T$ such that the conditions $(a)-(d)$ hold in $U$. Now let the $\alpha$-pattern P be the one derived from $(U, \tau\upharpoonright U)$. By Theorem \ref{NN} there exists $U'\subset \Gamma\backslash T$ such that $U'$ has the $\alpha$-pattern $P$. Thus $T\cup U'$ is isomorphic to $T\cup U$ sending $T$ to $T$. Furthermore, $U'$ is $F_g$-homogeneous. Now we will use the following Claims.

\begin{claima}\label{5.10}
Suppose that $X_1$, $X_2\subseteq U$ and that $|X_1\cap R_i|=|X_2\cap R_i|$ for $i=l$ and $r$. Let
$\phi: T\cup X_1\longrightarrow T\cup X_2$ be an order-preserving
bijection such that $\phi\upharpoonright E$ is an isomorphism for all $E\in[T\cup
X_1]^2\backslash [X_1]^2$. Then for all $E\in[X_1]^2$,

$g\upharpoonright E$ is an isomorphism if and only if $g\upharpoonright
\phi(E)$ is an isomorphism.
\end{claima}

\begin{proof}
We prove this by contradiction. We may assume that there exists some $E\in[X_1]^2$ such that $g\upharpoonright E$ is an isomorphism while $g\upharpoonright \phi[E]$ is
not. Since $U$ satisfies condition $(b)$, there exist $(2\times 2)$-subgraphs $V$, $W\subset U$, and $F\in [V]^2$,
$F'\in [W]^2$ with $\tau(E)=\tau(F)$ and $\tau(\phi[E])=\tau(F')$ satisfying the following condition.

There exists an order-preserving bijection $\alpha: T\cup V\longrightarrow T\cup W$ mapping $F$ to $F'$ such that for every $A\in[T\cup V]^2\backslash F$, $\alpha\upharpoonright A$ is an isomorphism.

In particular, $\tau(A)=\tau(\alpha(A))$ for all $A\in[V]^2\backslash F$. Since $U$ is $F_g$-homogeneous, it follows that for all $A\in[V]^2\backslash F$, $g\upharpoonright A$ is an isomorphism if and only if $g\upharpoonright\alpha(A)$ is an isomorphism. Since $\tau(E)=\tau(F)$ and $\tau(\phi[E])=\tau(F')$, we have $g\upharpoonright F$ is an isomorphism but $g\upharpoonright F'$ is not an isomorphism. Let $P=|\{A\in[V]^2\mid g\upharpoonright A$ is not an isomorphism$\}|$
and $Q=|\{A\in[W]^2\mid g\upharpoonright A$ is not an isomorphism$\}|$. Then $Q=P+1$ because
of the effect of $g$ on $F$ and $F'$. But by Lemma \ref{2and2}, $g\in S_{\{l, r\}}(\Gamma)$ implies $g$ preserve the parity of cross-types in $V$ and $W$. Thus $P$ and $Q$ must be even, which contradicts $Q=P+1$. This complete the proof of Claim A.
\end{proof}

\begin{claimb}\label{fU1}
$g\upharpoonright{U}\in\mathfrak{F}(S_X(\Gamma)^*)$.
\end{claimb}

\begin{proof}
Since $U$ satisfies the condition $(a)$, by Theorem \ref{analysis} there exists an $(m\times n)$-analysis of
$g\upharpoonright U$: $g_0, g_1, ...,
g_t\in\mathfrak{F}(S_{\{l, r\}}(\Gamma))$. That is, for each
$0\leq{j}\leq{t-1}$, there exists a finite $(m\times n)$-subgraph $Y_j$ in $U$ and an element
$\theta_j\in S_{\{l, r\}}(\Gamma)$ such that

\renewcommand{\labelenumi}{(\arabic{enumi})}
\begin{enumerate}
\item $g_0=\theta\circ g\upharpoonright U$ where $\theta\in \mathfrak{F}(Aut(\Gamma)^*)$;
\item $\theta_j$ is either an isomorphism or a switch w.r.t some vertex $a_j\in Y_j\cap R_{i_j}$ where $i_j\in\{l, r\}$;
\item $\theta_j\upharpoonright{Y_j}=(g_j\circ{g_{j-1}}\circ ...
\circ{g_0})\upharpoonright{Y_j}$;
\item
$g_{j+1}=\theta_j^{-1}\upharpoonright{ran({g_j}\circ...\circ{g_0})}$.
\item $(g_t\circ ... \circ{g_0}): U\longrightarrow{\Gamma}$ is an
isomorphic embedding.
\end{enumerate}

If all $\{i_0, ..., i_{t-1}\}\subseteq X$, then $g_0\upharpoonright
U\in \mathfrak{F}(S_X(\Gamma)^*)$, and so $g\upharpoonright
U\in \mathfrak{F}(S_X(\Gamma)^*)$. Otherwise, let $j$ be the least integer such that $i_j\notin X$ and the corresponding $\theta_j$ is a switch w.r.t. $a_j\in R_{i_j}\cap Y_j$. Note $\theta_0, ..., \theta_{j-1}\in S_X(\Gamma)^*$, which implies $g_1, ..., g_{j}\in\mathfrak{F}(S_X(\Gamma)^*)$. We prove this situation can not occur. Note
that $(g_j\circ ... \circ g_0)\upharpoonright Y_j=\theta_j\upharpoonright Y_j$ is a
switch w.r.t a vertex $a_j\in R_{i_j}\cap Y_j$.


Since $U$ satisfies the condition $(c)$, there exist an $(m\times n)$-subgraph $H'\subseteq U$ which is an isomorphic copy of H, and a map $\phi$ satisfying that $\phi: T\cup Y_j\longrightarrow T\cup H'$ is an order-preserving bijection such that $\phi\upharpoonright E$ is an isomorphism for all $E\in[T\cup Y_j]^2\backslash [Y_j]^2$.

By Claim A, for every $E\in[Y_j]^2$, $g\upharpoonright E$ is
an isomorphism if and only if $g\upharpoonright \phi[E]$ is an isomorphism. Next we will show there exist $g_1^*, ...,
g_j^*\in\mathfrak{F}(S_X(\Gamma)^*)$ such that $g_j^*\circ ... \circ g_1^*\circ
g_0\upharpoonright H'$ is a switch w.r.t $\phi (a_j)$ of
 $H'$ in $R_{i_j}$. But then Lemma \ref{H} implies that $i_j\in X$, contrary to
our assumption. We define $g_l^*$ inductively for $1\leq l\leq j$ such that for all $E\in [Y_j]^2, g_l\circ ... \circ g_0\upharpoonright E$ is an isomorphism if and only if $g_l^*\circ ... \circ g_1^*\circ g_0\upharpoonright \phi[E]$ is an isomorphism.

Suppose $g_1^*, \dots, g_{l-1}^*$
have been defined, we now define $g_l^*$ for $1\leq l\leq j$:
\renewcommand{\labelenumi}{(\alph{enumi})}
\begin{enumerate}
\item If $\theta_{l-1}$ is an isomorphism, or if $\theta_{l-1}$ is a switch w.r.t. $a_{l-1}\in R_{i_{l-1}}$ but $a_{l-1}\notin Y_{j}$, then $g_l$ is an isomorphism on $g_{l-1}\circ ... \circ g_0 [Y_{j}]$, which is in $\mathfrak{F}(S_X(\Gamma))$. We define $g_l^*$ as the identity map on $ ran (g_{l-1}^*\circ ... \circ g_1^*\circ g_0)$;
\item Otherwise, $\theta_{l-1}$ is a switch w.r.t. $a_{l-1}\in R_{i_{l-1}}$ and $a_{l-1}\in Y_{j}$, then $g_l$ is a switch w.r.t $g_{l-1}\circ ... \circ g_0 (a_{l-1})\in R_{i_{l-1}}\cap g_{l-1}\circ ... \circ g_0 [Y_{j}]$. Then $g_l\in\mathfrak{F}(S_X(\Gamma))$. Let $\theta^*\in S_X(\Gamma)$ be a switch with respect to $g_{l-1}^*\circ ... \circ g_1^*\circ g_0(\phi(a_{l-1}))$, and define $g_l^*$ as $\theta^*\upharpoonright  ran (g_{l-1}^*\circ ... \circ g_1^*\circ g_0)$.
\end{enumerate}
This completes the proof of Claim B.

\end{proof}

Now choose $\psi_0\in S_X(\Gamma)^*$ such that
$\psi_0\upharpoonright{U}=g\upharpoonright{U}$, and let
$h_1=\psi_0^{-1}\circ{g}\upharpoonright{T\cup U}$. Then
$h_1\upharpoonright E$ is the identity for every $E\in [U]^2$.

Next, we choose a vertex $v_1$ in $T$. WLOG we let $v_1\in R_l$, and consider
$h_1\upharpoonright U\cup \{v_1\}$. Notice that if
$E\in[U\cup\{v_1\}]^2$ and $h_1\upharpoonright E$ is not an
isomorphism, then $v_1\in E$.

\begin{claimc}\label{fU2v}
$h_1\upharpoonright U\cup\{v_1\}\in\mathfrak{F}(S_X(\Gamma)^*)$.
\end{claimc}

\begin{proof}
Since $h_1\upharpoonright U=id$ and $h_1\in \mathfrak{F}(S_{\{l, r\}}(\Gamma))$, by Lemma \ref{2and2} $h_1$ preserves the parity of cross-types in every
$(2\times 2)$-subgraph of $U\cup\{v_1\}$. So $h_1\upharpoonright U\cup\{v_1\}$
is either an isomorphism or a switch with respect to $v_1$. We may assume $h_1\upharpoonright U\cup\{v_1\}$ is a switch with respect to $v_1$. Then there exists a switch $\psi_1\in S_{\{l\}}(\Gamma)$
such that $h_1\upharpoonright U\cup\{v_1\}=\psi_1\upharpoonright
U\cup\{v_1\}$, and for all $E\in[T\cup
U]^2$ with $v_1\notin E$, $\psi_1\upharpoonright E$ is an isomorphism.

If $l\in X$, then $\psi_1\in S_X(\Gamma)$ and so $\psi_1\in S_X(\Gamma)^*$, then we're done. Otherwise, we show that there will be contradiction. Since $U$ satisfies the condition $(d)$, there exists $(m\times n)$-subgraph $V$ in $U\cup\{v\}$ such that $v\in V$, and $V\simeq H$. Then $h_1\upharpoonright V$ is a switch with respect to $v_1\in R_l$. By Lemma \ref{H}, we have $l\in X$, a contradiction with our assumption. This completes the proof of Claim C.
\end{proof}

By Claim C, there exists $\psi_1\in S_X(\Gamma)^*$ is either an isomorphism or a switch w.r.t. $v_1\in R_i$ for $i\in X$ such that
\renewcommand{\labelenumii}{(\alph{enumi})}
\begin{enumerate}
\item $\psi_1\upharpoonright U\cup\{v_1\}=h_1\upharpoonright U\cup\{v_1\}$;
\item For all $E\in[T\cup U]^2$, if $v_1\notin E$, then $\psi_1\upharpoonright E$ is an isomorphism.
\end{enumerate}
Let $h_2=\psi_1^{-1}\circ h_1\upharpoonright T\cup U$, then for all $E\in [T\cup \{v_1\}]^2$, $h_2\upharpoonright E$ is an isomorphism.

Now choose a second vertex $v_2\in T\backslash\{v_1\}$. Arguing similarly as in Claim C, there exists $\psi_2\in S_X(\Gamma)^*$ which is either an isomorphism or a switch w.r.t. $v_2\in R_i$ for $i\in X$ such that
\renewcommand{\labelenumii}{(\alph{enumi})}
\begin{enumerate}
\item $\psi_2\upharpoonright U\cup\{v_2\}=h_2\upharpoonright U\cup\{v_2\}$;
\item For all $E\in[T\cup U]^2$, if $v_2\notin E$, then $\psi_2\upharpoonright E$ is an isomorphism.
\end{enumerate}
Note that such $\psi_2$ is an isomorphism for all the cross-edges $E$ such that $E\subseteq U$ or $E\cap T=\{v_1\}$. Thus when we next adjust $h_2$ to $h_3=\psi_2^{-1}\circ h_2\upharpoonright T\cup U$, we do not spoil the progress which we make with our earlier adjustments. Hence for all $E\in [T\cup \{v_1, v_2\}]^2\backslash \{v_1, v_2\}$, $h_3\upharpoonright E$ is an isomorphism.

By continuing in this fashion, we can deal with the other vertices in $T\backslash \{v_1, v_2\}$.
After $|T|$-1 steps, we obtain a map $h^*:
T\cup U\longrightarrow T\cup U$ such that
\renewcommand{\labelenumii}{(\alph{enumi})}
\begin{enumerate}
\item There exists $\psi^*\in S_X(\Gamma)^*$ such that $h^*=\psi^*\circ g\upharpoonright T\cup U$;
\item For all $E\in[T\cup U]^2\backslash [T]^2$, $h^*\upharpoonright E$ is an isomorphism.
\end{enumerate}

Now Lemma \ref{5.3} implies $h^*\upharpoonright T$ is an
isomorphism, hence $g\upharpoonright T={\psi^*}^{-1}\circ h^*\upharpoonright T\in\mathfrak{F}(S_X(\Gamma)^*)$. This completes the proof of Theorem \ref{main1}.
\end{proof}
\section{Some Special Finite Subgraphs of $\Gamma$}
In this section we show existence of some special finite bipartite subgraphs $\Gamma_{N_{G}}$ and $Z$. We will use the following two lemmas, each of which witness the fact that $G$ is a nontrivial reduct.

\begin{lem}\label{6.3}
Let $G$ be a proper closed subgroup of $Sym_{\{l, r\}}(\Gamma)$. There exists a finite bipartite subgraph $B_0$ of $\Gamma$ such that for every $g\in G$, there exist cross-edges $E_1, E_2$ in $B_0$ such that $P_1(g[E_1])$ and $P_2(g[E_2])$.
\end{lem}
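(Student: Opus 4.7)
The strategy is a proof by contradiction via density: if no such $B_0$ existed, then $G$ would be able to match every finite partial side-preserving bijection of $\Gamma$, hence be dense in $Sym_{\{l,r\}}(\Gamma)$, and therefore coincide with it by closedness, contradicting properness. As in the rest of the section, I take $G$ to contain $Aut(\Gamma)^*$; in particular the global switch $\rho$ from the introduction lies in $G$.

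Suppose for contradiction that for every finite bipartite $B\subseteq \Gamma$ there exists $g_B\in G$ such that every cross-edge in $B$ is mapped by $g_B$ to the same cross-type. By postcomposing with $\rho\in G$ if necessary I may assume throughout that $g_B[B]$ is a complete $P_1$-bipartite subgraph of $\Gamma$. My aim is to show that for every finite bipartite $F\subseteq\Gamma$ and every injective side-preserving map $\phi:F\to\Gamma$, some $g\in G$ satisfies $g\upharpoonright F=\phi$. Since the sets $\{h\in Sym_{\{l,r\}}(\Gamma):h\upharpoonright F=\phi\}$ form a basis for the topology on $Sym_{\{l,r\}}(\Gamma)$, this would render $G$ dense, hence $G=Sym_{\{l,r\}}(\Gamma)$, contradicting properness.

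To construct such a $g$, I pick $g_1,g_2\in G$ so that both $g_1[F]$ and $g_2[\phi(F)]$ are complete $P_1$-bipartite subgraphs of $\Gamma$, applying the contradictory assumption to the bipartite graphs $F$ and $\phi(F)$ and normalizing colors with $\rho$. Because $\phi$ preserves sides, $g_1[F]$ and $g_2[\phi(F)]$ have the same cardinalities on each side, so the assignment $g_1(v)\mapsto g_2(\phi(v))$ for $v\in F$ is an isomorphism between finite induced substructures of $\Gamma$. By the homogeneity of the random bipartite graph this extends to some $\alpha\in Aut(\Gamma)\leq G$, and then $g:=g_2^{-1}\circ\alpha\circ g_1\in G$ agrees with $\phi$ on $F$, as required.

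The main subtlety, and what I expect to be the principal obstacle, is arranging that the two monochromatizers $g_1$ and $g_2$ produce matching colors on their respective targets; this is precisely where the presence of the color-flip $\rho\in Aut(\Gamma)^*$ in $G$ is used, since otherwise a $P_1$-image and a $P_2$-image are not related by any $\alpha\in Aut(\Gamma)$. Once the colors agree, finite homogeneity of $\Gamma$ finishes the construction, and closedness of $G$ together with density yields the contradiction $G=Sym_{\{l,r\}}(\Gamma)$.
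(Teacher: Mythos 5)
Your proposal is correct and follows essentially the same route as the paper: negate the statement to get a monochromatizing map $g_B$ for every finite $B$, then conjugate by an automorphism of $\Gamma$ (using homogeneity) to realize an arbitrary finite side-preserving partial bijection inside $G$, and conclude $G=Sym_{\{l,r\}}(\Gamma)$ by density and closedness. The only difference is cosmetic: the paper matches the colors of the two monochromatized images by a pigeonhole argument over an increasing chain $\Gamma=\bigcup_n\Gamma_n$, whereas you normalize both to $P_1$ by composing with the global switch $\rho\in Aut(\Gamma)^*\leq G$, which is legitimate under the standing hypotheses of the section.
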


\begin{proof}
Suppose no such $B_0$ exists, then for every finite bipartite subgraph $B$ of $\Gamma$, there exists some $g\in G$ such that either $P_1(g[E])$ for every cross-edge $E$ in $B$; or $P_2(g[E])$ for every cross-edge $E$ in $B$.

Express $\Gamma=\cup_{n\in\mathbb{N}}{\Gamma_n}$ as an union of an increasing
chain of finite bipartite subgraphs $\Gamma_n$.
There exists an infinite subset $I$ of $\mathbb{N}$ such that either for every $n\in I$, there
is $g_n\in G$ such that $P_1(g_n[E])$ for every cross-edge $E$ in $\Gamma_n$; or for every $n\in I$, there
is $g_n\in G$ such that $P_2(g_n[E])$ for every cross-edge $E$ in $\Gamma_n$.

We may assume the first situation holds. For any $(m\times n)$-subgraph
$C\subset\Gamma$ where $m, n\in \mathbb{N}$, there exists $N\in I$ such that $C\subseteq \Gamma_N$. Hence there exists some $g_c\in G$ such that $P_1(g_c[E])$ for every cross-edge $E$ in $C$. Then for any two $(m\times n)$-subgraphs $A,
B$ of $\Gamma$, we can find $\sigma\in Aut(\Gamma)$ sending $g_A[A]$ to $g_B[B]$. Then the map $f = g^{-1}_B \circ \sigma \circ g_A\in G$ and $f$ takes $A$ to $B$. But $A$ and $B$ are arbitrary $(m\times n)$-subgraphs of $\Gamma$, and so such $f's$ generate all of $Sym_{\{l, r\}}(\Gamma)$, a contradiction with the fact $G$ is a proper subgroup of $Sym_{\{l, r\}}(\Gamma)$. This completes the proof of Lemma \ref{6.3}.
\end{proof}

\begin{lem}\label{6.4}
Let $i\in \{l, r\}$ and $j\in \{1, 2\}$, G as above. There exists a finite bipartite subgraph $B_j^i$ of $\Gamma$ satisfying the following property for every $g\in G$:\\
$(\dagger)$ No vertex $v\in B_j^i\cap R_i$ has the property that for every cross-edge $E$ in $B_j^i$, $\neg P_j(g[E])$ if and only if if and only if $P_j(E)$ and $v\in E$.
\end{lem}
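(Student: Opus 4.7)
The plan is to argue by contradiction, with Lemma \ref{6.3} doing the essential work. Suppose that no such $B_j^i$ exists; then for every finite bipartite subgraph $B$ of $\Gamma$ there exist $g_B\in G$ and $v_B\in B\cap R_i$ for which $(\dagger)$ holds on $B$ at $v_B$.

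The first step is to pin down the distinguished vertex. Let $B_0$ be the subgraph provided by Lemma \ref{6.3}. I claim that whenever $B\supseteq B_0$, the vertex $v_B$ must lie in $B_0\cap R_i$. For if $v_B\notin B_0$, then every cross-edge $E$ of $B_0$ satisfies $v_B\notin E$, so the right-hand side of $(\dagger)$ is always false and hence $P_j(g_B[E])$ holds for every cross-edge $E$ of $B_0$. This forces all cross-edges of $g_B[B_0]$ to have type $P_j$, contradicting Lemma \ref{6.3} applied to $g_B$ and $B_0$, which demands cross-edges of both types in the image.

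Next, writing $\Gamma=\bigcup_{n\in\mathbb{N}}\Gamma_n$ as an increasing union of finite bipartite subgraphs with $\Gamma_{n_0}\supseteq B_0$, the previous step gives $v_{\Gamma_n}\in B_0\cap R_i$ for all $n\geq n_0$. Since $B_0\cap R_i$ is finite, pigeonhole yields a fixed $v^*\in B_0\cap R_i$ with $v_{\Gamma_n}=v^*$ for infinitely many $n$.

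To close the argument I will use that Lemma \ref{6.3} is $Aut(\Gamma)$-invariant: since $Aut(\Gamma)\leq G$, for any $\phi\in Aut(\Gamma)$ and any $g\in G$ we have $g\circ\phi\in G$, and applying Lemma \ref{6.3} to $g\circ\phi$ transfers the conclusion from $B_0$ to $\phi(B_0)$. Using homogeneity and the extension properties of $\Gamma$, I pick an automorphic copy $B_0'$ of $B_0$ disjoint from $\{v^*\}$, then choose $n$ large enough that $B_0'\subseteq\Gamma_n$ and $v_{\Gamma_n}=v^*$. Every cross-edge $E$ of $B_0'$ avoids $v^*$, so $(\dagger)$ forces $P_j(g_{\Gamma_n}[E])$ for every such $E$, which directly contradicts Lemma \ref{6.3} for $B_0'$. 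The only real obstacle is the pigeonhole/selection step together with ensuring the copy of $B_0$ can be placed off $v^*$, but both are routine given the richness of $\Gamma$.
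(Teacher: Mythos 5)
Your argument hinges on reading $(\dagger)$ literally as ``$\neg P_j(g[E])$ iff ($P_j(E)$ and $v\in E$)'', so that whenever the right-hand side fails you may conclude $P_j(g[E])$ --- in particular, that $g$ sends every cross-edge of a copy of $B_0$ avoiding the distinguished vertex to a $P_j$-edge. That is not the property the lemma is actually about. The printed statement is garbled (note the doubled ``if and only if''), but both the paper's own proof (``$g_0$ preserves the cross-types on all the cross-edges in $B_0$ except those cross-edges $E$ where $P_j(E)$ and $v_0\in E$'') and the way $B_1^l, B_2^l$ are used in the proof of Theorem \ref{6.1} (conditions (3) and (4) there say ``$f_1\circ\sigma\upharpoonright E$ is not an isomorphism iff $P_j(E)$ and $w\in E$'') show that the condition to be excluded is: $g$ \emph{changes} the cross-type of $E$ exactly when $P_j(E)$ and $v\in E$, and \emph{preserves} the cross-type of every other cross-edge. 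Under that reading your key step collapses: if $v_B\notin B_0'$, then $g_B$ simply preserves all cross-types on $B_0'$, so $g_B[B_0']$ has exactly the same mixture of $P_1$- and $P_2$-edges as $B_0'$, and there is no conflict whatsoever with Lemma \ref{6.3}. The same objection defeats your first step (forcing $v_B$ into $B_0\cap R_i$) and your final step. More fundamentally, the bad maps are invisible on any subgraph avoiding the distinguished vertex, so no argument that only inspects such subgraphs can detect them; your strategy cannot be repaired within the intended reading.

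The paper's proof runs on a different engine, which is the idea missing from your proposal: a descent on the number of $P_j$-edges. If $B_0$ admitted a bad pair $(g_0,v_0)$, then $g_0[B_0]$ has strictly fewer $P_j$-edges than $B_0$ (every $P_j$-edge at $v_0$ is flipped and nothing else moves); since $g_0[B_0]$ is again a finite subgraph, the assumed failure of the lemma produces a new bad pair for it, and iterating yields a composite in $G$ that eliminates every instance of $P_j$ from $B_0$, contradicting Lemma \ref{6.3}. (Even that argument tacitly needs the bad vertex to meet at least one $P_j$-edge at each stage so that the count strictly drops, but the monotone descent is the essential mechanism, and nothing in your proposal plays that role.)
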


\begin{proof}
Fix $i$ and $j$. Let $m=|B_0\cap R_l|$ and $n=|B_0\cap R_r|$ for $B_0$ in Lemma
\ref{6.3}. We prove it by contradiction. Suppose there is no finite bipartite graph satisfying the property $(\dagger)$ for every $g\in G$. Then $B_0$ does not satisfy the property $(\dagger)$ for all $g \in G$,
then there exists some $g_0 \in G$ and $v_0 \in B_0$ such that $g_0
$ preserves the cross-types on all the cross-edges in $B_0$ except
those cross-edges $E$ where $P_j(E)$ and $v_0\in E$. Now compared
with $B_0$, $g_0[B_0]$ has fewer cross-edges with $P_j$ holding on
them. Note that $g_0[B_0]$ is finite, so it does not satisfy the property $(\dagger)$ by assumption. Similarly we can find
$g_1$ and $v_1 \in g_0[B_0]$ witnessing this failure, and such that
$g_1g_0[B_0]$ has even fewer cross-edges with $P_j$. Thus we can
find a sequence of elements of $G$ successively reducing the number
of instances of $P_j$, and finally we get their composite $g$
which, when applied to $B_0$, has eliminated all instances of $P_j$.
But this contradicts the property of $B_0$ in Lemma \ref{6.3}. Thus
some $(m\times n)$-subgraph must satisfy the requirement for $B_j^i.$
\end{proof}

Note that the following graphs exist in $\Gamma$:
\renewcommand{\labelenumi}{(\alph{enumi})}
\begin{enumerate}
\item the finite bipartite subgraph $B_0$ as in Lemma \ref{6.3};
\item the finite bipartite subgraph $B_i^j$ for $i\in \{l, r\}$ and $j\in \{1, 2\}$ as in Lemma \ref{6.4}.
\end{enumerate}
The existence of these finite subgraphs can be expressed by a first-order sentence $\sigma$, and $\Gamma\models \sigma$. By Theorem \ref{sfsp}, there exists $N_{G}\in \mathbb{N}$ such that for every $k\geq N_{G}$, $\Gamma_k$ satisfies $\sigma$.

In the rest of the section, we will prove the existence of a finite bipartite graph $Z\subset \Gamma$ having the properties that every $f\in G$ either preserves or interchanges cross-types on $Z$.

\begin{thm}\label{6.6}
Let $G$ be a proper closed subgroup of $Sym_{\{l, r\}}(\Gamma)$. There exists a finite bipartite subgraph $Z\subset \Gamma$ such that for every $f\in G$ and every cross-edge $E$ in $Z$, either $P_i(E)$ implies $P_i(f[E])$; or $P_i(E)$ implies $\neg P_i(f[E])$, where $i=1$ and $2$. That is, $f$ either preserves or interchanges cross-types on $Z$.
\end{thm}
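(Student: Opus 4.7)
The plan is to combine the Abramson--Harrington theorem (Theorem~\ref{NN}) with Lemma~\ref{6.3}. The idea is that each $f\in G$ induces a ``flip pattern'' on cross-edges of any finite bipartite subgraph, and a Ramsey-theoretic argument allows us to find a single $Z$ on which every such pattern simultaneously depends only on cross-type; Lemma~\ref{6.3} then eliminates the two remaining ``mixed'' possibilities.

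For a finite bipartite subgraph $W\subseteq\Gamma$ and each $f\in G$, define $\chi_f$ on the cross-edges of $W$ by $\chi_f(E)=0$ if $f$ preserves the cross-type on $E$ and $\chi_f(E)=1$ otherwise. The set of flip patterns $\{\chi_f\upharpoonright W:f\in G\}$ is finite, of cardinality at most $2^{m}$ where $m$ is the number of cross-edges of $W$. Enumerate the distinct patterns as $\chi_1,\ldots,\chi_N$ and set $F(A)=(\chi_1(A),\ldots,\chi_N(A))\in\{0,1\}^{N}$. Let $\tau$ be the $\alpha$-coloring of $[\Gamma]^{\leq 2}$ that records the side of each vertex and the cross-type of each cross-edge. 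Choose a target $\alpha$-pattern $P$ rich enough to embed an isomorphic copy of the $B_0$ from Lemma~\ref{6.3}. By Theorem~\ref{NN}, there is an $\alpha$-pattern $Q$ such that any colored set of pattern $Q$ contains an $F$-homogeneous subset of pattern $P$. Since $\Gamma$ has the extension property and the existence of an induced subgraph of the prescribed pattern $Q$ is first-order expressible, a suitable $W\subseteq\Gamma$ of pattern $Q$ exists; applying Theorem~\ref{NN} yields $Z\subseteq W$ of pattern $P$ on which $F\upharpoonright [Z]^2$ depends only on the $\alpha$-pattern of its argument.

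Because the $\alpha$-pattern of a cross-edge in $Z$ is determined solely by its cross-type, each $\chi_i\upharpoonright Z$ takes one of four forms: (i) identically $0$, (ii) identically $1$, (iii) flipping only $P_1$-edges, or (iv) flipping only $P_2$-edges. Only (i) and (ii) realize the uniform conclusion we want. In case (iii), the image $f[Z]$ consists entirely of $P_2$-cross-edges, so the embedded copy of $B_0$ satisfies $f[B_0]\subseteq f[Z]$ and contains cross-edges of type $P_2$ only, contradicting Lemma~\ref{6.3}; case (iv) is ruled out symmetrically. Hence for every $f\in G$, $\chi_f\upharpoonright Z$ is identically $0$ or identically $1$, which is precisely the conclusion of Theorem~\ref{6.6}.

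The main obstacle will be engineering $P$ and $\tau$ in concert: $P$ must be rich enough to contain a $B_0$-copy (so Lemma~\ref{6.3} applies) while $\tau$ must be coarse enough that the $\alpha$-pattern of a cross-edge records only its cross-type (so that Abramson--Harrington forces the four-case dichotomy above, rather than allowing a more refined dependence that would make the final contradiction harder to obtain). A secondary step is verifying that a $W$ of the required pattern $Q$ sits inside $\Gamma$, which follows from the extension property of $\Gamma$ and, where needed, from the SFBSP (Theorem~\ref{sfsp}).
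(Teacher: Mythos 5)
Your argument follows the paper's proof of Theorem \ref{6.6} very closely: the same isomorphism-type coloring, a target pattern $P$ chosen so that any set realizing it contains a copy of $B_0$ (the paper takes $P$ to be the pattern of $\Gamma_{N_{G}}$, which contains $B_0$), a partition function recording whether $f$ preserves the cross-type of a cross-edge, an application of Theorem \ref{NN}, the resulting four-case dichotomy on the homogeneous set, and Lemma \ref{6.3} to eliminate the two ``mixed'' cases. All of that is sound and matches the paper.

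The one place you deviate is where a genuine gap appears. To get a single $Z$ that works for every $f\in G$ simultaneously, you bundle the distinct flip patterns $\chi_1,\dots,\chi_N$ on $W$ into one function $F$ with range $\{0,1\}^{N}$. But in Theorem \ref{NN} the pattern $Q$ is produced from the data $(n,e,M,\alpha,P)$, so the number of colors $M=2^{N}$ must be fixed \emph{before} $Q$ is obtained --- and hence before $W$, which is required to have pattern $Q$, is available. Your $N$ is the number of distinct restrictions $\chi_f\upharpoonright W$, bounded only by $2^{m}$ where $m$ is the number of cross-edges of $W$; there is no bound on $N$ independent of $W$, so the construction is circular and there is no guarantee of a fixed point. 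The paper sidesteps this by defining the partition function for one $f$ at a time (with $M=2$), which yields, for each $f\in G$, a subset $Z_f\subseteq W$ isomorphic to $\Gamma_{N_{G}}$ on which $f$ preserves or interchanges all cross-types; since every such $Z_f$ realizes the same fixed pattern, this per-$f$ statement is all that is actually used in Section 7. If you want the literal ``one $Z$ for all $f$'' statement, a larger color set will not do it; you would need a different device (for instance, a compactness argument exploiting that the condition on $f$ depends only on $f\upharpoonright Z$ and that $G$ is closed).
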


\begin{proof}[Proof of Theorem \ref{6.6}]
Fix an ordering of the vertices of $\Gamma$. For a suitable system of colors $\alpha$, define an $\alpha$-coloring $\chi$ of $\Gamma^{i\leq 2}$ by setting:

$\chi(A)=\chi(B)$ if and only if $A, B\in [\Gamma]^{i\leq 2}$ and the bijection $A\rightarrow B$ is an isomorphism;

Let $P$ be the $\alpha$-pattern such that if $U$ is a finite bipartite $U$ of $\Gamma$ and $(U, \chi\upharpoonright U)$ has $\alpha$-pattern $P$, then $(U, \chi\upharpoonright U)\cong \Gamma_{N_{G}}$. By Theorem \ref{NN} there exists an
$\alpha$-pattern $Q$ such that for any
$\alpha$-colored set $(X, \chi\upharpoonright X)$ with $\alpha$-pattern $Q$ and for any partition $F: [X]^2\longrightarrow 2$, there exists $Z$ of $X$ such that $Z$ has the $\alpha$-pattern $P$, hence $Z\cong \Gamma_{N_{G}}$, and $(Z, \chi\upharpoonright Z)$ is $F$-homogeneous.

We define a particular partition $F: [X]^2\longrightarrow 2$ such that for every $E\in[X]^2$,
\begin{itemize}
\item $F(E)=1$ if $E\in [R_i]^2$ for $i=l, r$, or if $E$ is a cross-edge and $f$ preserves $P_j$ on $E$ for $j=1, 2$
\item $F(E)=0$ otherwise.
\end{itemize}
Then one of the following conditions must hold in $Z$ for every cross-edge $E$ where $i=1, 2$.

\renewcommand{\labelenumi}{(\arabic{enumi})$$}
\begin{enumerate}
\item $P_i(E)$ implies $P_i(f[E])$;
\item $P_i(E)$ implies $\neg P_i(f[E])$;
\item $P_1(f[E])$;
\item $P_2(f[E])$.
\end{enumerate}

 Note that $Z\cong \Gamma_{N_{G}}$, which contains $B_0$. This guarantees that only $(1)$ or $(2)$ hold for $Z$, as desired. This completes the proof of Theorem \ref{6.6}.
\end{proof}

\section{The Closed Groups between $S_{\{l, r\}}(\Gamma)$ and $Sym_{\{l, r\}}(\Gamma)$}
In this section, we will prove the following Theorem:
\begin{thm}\label{6.1}
If $G$ is a closed subgroup such that $Aut(\Gamma)^*\leq G< Sym_{\{l, r\}}(\Gamma)$, then $G\leq S_{\{l, r\}}{(\Gamma)}$.
\end{thm}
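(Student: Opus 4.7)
The plan is to apply Lemma \ref{2and2}: it suffices to show that every $g\in G$ preserves the parity of cross-types on every $(2\times 2)$-subgraph of $\Gamma$. Fix $g\in G$ and an arbitrary $(2\times 2)$-subgraph $W\subseteq\Gamma$; the goal is to show $|g(W)|_{P_1}\equiv|W|_{P_1}\pmod 2$, where $|\cdot|_{P_1}$ counts the number of $P_1$ cross-edges. The main tool is Theorem \ref{6.6}, which produces a finite bipartite $Z\subseteq\Gamma$ on which every element of $G$ acts as either an isomorphism or an anti-isomorphism. Since $Z\cong\Gamma_{N_G}$ and there are only finitely many isomorphism types of $(2\times 2)$-subgraph, I would strengthen the first-order sentence used in Section 6 to additionally require that $\Gamma_{N_G}$ realize every such type; this remains true in $\Gamma$, and by SFBSP (Theorem \ref{sfsp}) it passes to $\Gamma_{N_G}$, and hence to $Z$, once $N_G$ is taken sufficiently large.

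Consequently $Z$ contains some $(2\times 2)$-subgraph isomorphic to $W$, and by homogeneity of $\Gamma$ one may choose $\tau\in Aut(\Gamma)\le G$ with $\tau(W)\subseteq Z$ and $\tau(W)\cong W$. Now set $h:=g\tau^{-1}\in G$; by Theorem \ref{6.6} applied to $h$, the restriction $h\upharpoonright Z$ is either an isomorphism or an anti-isomorphism. In either case $h$ either preserves all four cross-types of $\tau(W)\subseteq Z$ or flips all four, so the parity of $|h(\tau(W))|_{P_1}$ equals that of $|\tau(W)|_{P_1}$. But $h(\tau(W))=g\tau^{-1}(\tau(W))=g(W)$, and $|\tau(W)|_{P_1}=|W|_{P_1}$ because $\tau$ is an isomorphism, giving $|g(W)|_{P_1}\equiv|W|_{P_1}\pmod 2$. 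Since $W$ was arbitrary, Lemma \ref{2and2} yields $g\in S_{\{l, r\}}(\Gamma)$, hence $G\le S_{\{l, r\}}(\Gamma)$.

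The principal obstacle is to secure Theorem \ref{6.6} in the precise form used above: namely, a single $Z\subseteq\Gamma$ that is simultaneously rigid for every element of $G$ and is rich enough to embed a copy of the prescribed $(2\times 2)$-subgraph $W$. Uniform rigidity is exactly the content of Theorem \ref{6.6}, while the required richness follows from enlarging the defining sentence of $\Gamma_{N_G}$ as above and invoking SFBSP. Should one only have a rigid $Z_f$ depending on each $f\in G$, the same argument can be carried out by applying a pointed variant of the Ramsey-style construction of Theorem \ref{6.6} to the specific element $h=g\tau^{-1}$, treating $\tau(W)$ as an initial segment so that $\tau(W)$ is guaranteed to lie inside the resulting $Z_h$.
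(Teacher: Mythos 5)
There is a genuine gap, and it lies in the quantifier structure of Theorem \ref{6.6}. Your argument needs a single finite $Z$ that works \emph{uniformly} for every element of $G$: you fix $Z$ first, then choose $\tau\in Aut(\Gamma)$ with $\tau(W)\subseteq Z$, and only then apply Theorem \ref{6.6} to the new element $h=g\tau^{-1}$. But no such uniform $Z$ can exist in the generality required. The hypothesis must cover $G=S_{\{l,r\}}(\Gamma)$, which is a proper closed subgroup of $Sym_{\{l,r\}}(\Gamma)$ containing $Aut(\Gamma)^*$; for any candidate $Z$ with at least two vertices on one side and one on the other, $G$ contains a switch $\sigma$ with respect to a single vertex $v\in Z$, and such a $\sigma$ flips the cross-types on the edges of $Z$ through $v$ while preserving the rest, so it neither preserves nor interchanges all cross-types on $Z$. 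What the Ramsey construction behind Theorem \ref{6.6} actually produces (and all the paper ever uses) is a copy $Z_f\cong\Gamma_{N_G}$ \emph{depending on} $f$: the partition $F$ in that proof is defined from $f$, so the $F$-homogeneous set depends on $f$. With the per-element version your argument becomes circular: $Z_h$ can only be produced after $h=g\tau^{-1}$ is fixed, yet $\tau$ was supposed to be chosen so that $\tau(W)$ lands inside $Z_h$.

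Your proposed fallback---a pointed Ramsey argument with $\tau(W)$ as an initial segment---does not repair this. Homogeneity over a base set $T$ (as in Theorem \ref{NN} and its use in Section 5) controls only the pairs lying in the homogeneous set away from $T$; it gives no information about the cross-edges that meet $T$ itself. Controlling $f$ on the edges incident to a prescribed $(2\times 2)$-subgraph is exactly the crux of the problem, and it is where the paper's proof spends its effort: it takes a maximal $a$ with $f\upharpoonright Z_a\in\mathfrak{F}(S_{\{l,r\}}(\Gamma))$, adjoins a single vertex $w$, and uses the subgraphs $B_1^l, B_2^l$ of Lemma \ref{6.4}, together with Theorem \ref{main1} and a second application of Theorem \ref{NN}, to rule out the ``partial switch at $w$'' behaviours and contradict the maximality of $a$. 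None of that machinery appears in your argument, which is a further sign that the uniform reading of Theorem \ref{6.6} is being asked to do work it cannot legitimately do.
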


 The SFBSP of $\Gamma$ will be used in the proof of Theorem \ref{6.1}. Recall that using Theorem \ref{sfsp}, we can express $\Gamma=\bigcup_{i\in\mathbb{N}} \Gamma_i$ as a union of an increasing chain of substructures $\Gamma_i$ such that\renewcommand{\labelenumi}{(\arabic{enumi})}
\begin{enumerate}
\item $\Gamma_i\subset \Gamma_{i+1}$ and $|\Gamma_i|=i$ for each $i\in\mathbb{N}$. In particular,
\begin{itemize}
\item if $i$ is even, then $|\Gamma_{i}\cap R_l|=|\Gamma_{i}\cap R_r|$;
\item otherwise, $|\Gamma_{i}\cap R_l|=|\Gamma_{i}\cap R_r|+1$.
\end{itemize}
\item for every sentence $\phi$ with $\Gamma\models \phi$, there exists $N\in \mathbb{N}$ such that $\Gamma_i\models \phi$ for all $i\geq N$.
\end{enumerate}

For the rest of this section, we fix $G$ be closed subgroup such that $Aut(\Gamma)^*\leq G< Sym_{\{l, r\}}(\Gamma)$. Let $X$ be the largest subset of $\{l, r\}$ such that $S_{X}(\Gamma)^*\subseteq G$, and so $X$ is also the largest subset of $\{l, r\}$ such that  $S_{X}(\Gamma)^*\subseteq G\cap S_{\{l, r\}}(\Gamma)$.  Note $G\cap S_{\{l, r\}}(\Gamma)$ is a closed subgroup of $S_{\{l, r\}}(\Gamma)$ containing $Aut(\Gamma)^*$, then by Theorem \ref{main1}, $G\cap S_{\{l, r\}}(\Gamma)=S_{X}(\Gamma)^*$.

\begin{proof}[Proof of Theorem \ref{6.1}]
We prove by contradiction. Assume $G$ is a closed subgroup with $Aut(\Gamma)\leq G< Sym_{\{l,
r\}}(\Gamma)$ but $G\nleqslant S_{\{l, r\}}(\Gamma)$. Then there exist a map $f\in G\backslash
S_{\{l, r\}}(\Gamma)$, and a
$2\times 2$-subgraph $Y$ of $\Gamma$ such that $f\upharpoonright Y$
does not preserve the parity of cross-types in $Y$. Let $Z\subset \Gamma$ be the finite bipartite
subgraph as in Theorem \ref{6.6}. Since $\Gamma$ is homogeneous,
there is $\phi\in Aut(\Gamma)$ such that $\phi(Z)=\Gamma_{N_G}$. Then there exists $s\in
\mathbb{N}$ such that $\phi(Y\cup Z)\subseteq \Gamma_s$. Let $M=\phi^{-1}[\Gamma_s]$. Then $Y\cup
Z\subseteq M$, and $\tau=\phi\upharpoonright M$ is an isomorphism from $M$ onto
$\Gamma_s$ with $\tau[Z]=\Gamma_{N_G}$.

For any $m$ with $N_G\leq m\leq s$, let $Z_m=\tau^{-1}[\Gamma_m]$ (Note $Z_{N_{G}}=Z$). By
Theorem \ref{6.6}, $f\upharpoonright Z_{N_G}\in \mathfrak{F}(S_{\{l, r\}}(\Gamma))$. Let $a$ be the greatest
integer such that $N_G\leq a\leq s$ and $f\upharpoonright Z_a\in  \mathfrak{F}(S_{\{l, r\}}(\Gamma))$. By the definition of $a$, Theorem \ref{main1} implies that there exists a map $\theta\in S_X(\Gamma)^*$
such that $f\upharpoonright Z_a=\theta\upharpoonright Z_a$. The existence of $Y\subseteq
M$ ensures that $a<s$. Suppose $Z_{a+1}=Z_a\cup\{v\}$. WLOG, let $v\in R_l$. We let
$f_1=(\theta^{-1}\circ f\circ\tau^{-1})\upharpoonright \Gamma_{a+1}$ and $w=\tau(v)$. By the maximality of $a$, $f\upharpoonright Z_{a+1}\notin  \mathfrak{F}(S_{\{l, r\}}(\Gamma))$. Thus $f_1\in \mathfrak{F}(G)\backslash \mathfrak{F}(S_{\{l,
r\}}(\Gamma))$.
%

Fix an ordering $\prec$ of
$\Gamma_{a+1}$ such that $w$ is the initial element.  For a suitable system of colors $\alpha$,
define an $\alpha$-coloring $\eta$ of $[\Gamma\backslash \{w\}]^{i\leq 2}$ by setting: $\eta(A)=\eta(B)$ if and only if
the order-preserving bijection $\{w\}\cup A\longrightarrow \{w\}\cup B$ is an isomorphism.

 Let the $\alpha$-pattern $P$ be such that if $(S, \eta\upharpoonright S)$ has $\alpha$-pattern P,
 then $S\cup \{w\}\simeq \Gamma_{a+1}$. By Theorem \ref{NN} there exists a finite bipartite graph
 $Q\subseteq\Gamma\backslash \{w\}$ such that for any partition $F: [Q]^2\longrightarrow 2$,
 there exists $V$ of $Q$ such that there exits an isomorphism $\sigma: V\cup \{w\}\longrightarrow \Gamma_{a+1}$ sending $w$ to $w$. Furthermore, $(V,
 \eta\upharpoonright V)$ is $F$-homogeneous.
Now we define the partition function $F: Q\longrightarrow 2$ for every $a\in Q$
\begin{itemize}
\item $F(a)=1$ if $a\in R_r$ and $f_1\upharpoonright (w, a)$ is an anti-isomorphism;

\item $F(a)=0$ if $a\in R_l$, or $a\in R_r$ with $f_1\upharpoonright (w, a)$ is an isomorphism.
\end{itemize}

Let $U=V\cup \{w\}$. Then one of the following conditions must hold on $U$. 
\begin{enumerate}
\item $f_1\circ \sigma$ is an isomorphism;
\item $f_1\circ \sigma$ is a switch with respect to $w$;
\item for all $E\in [U]^2$, $f_1\circ \sigma\upharpoonright E$ is not an
isomorphism if and only if $P_2(E)$ and $w\in E$;
\item for all $E\in [U]^2$, $f_1\circ \sigma\upharpoonright E$ is not an
isomorphism if and only if $P_1(E)$ and $w\in E$.
\end{enumerate}

Note $U\cong \Gamma_{a+1}$ and $\Gamma_{a+1}\supseteq \Gamma_{N_G}$, and $\Gamma_{N_G}$ contains an isomorphic copy of $B_1^l, B_2^l$, so $U$ contains isomorphic copies of $B_1^l$ and of $B_2^l$, which fail to obey the conditions $(3)$ and $(4)$. Thus only the conditions $(1)$ or $(2)$ holds in $U$, which implies that $f_1\circ \sigma\upharpoonright U \in
\mathfrak{F}(S_{\{l, r\}}(\Gamma))$, and so $f_1\in
\mathfrak{F}(S_{\{l, r\}}(\Gamma))$. This contradicts the fact that $f_1\notin \mathfrak{F}(S_{\{l, r\}}(\Gamma))$. This completes the proof of Theorem \ref{6.1}.
\end{proof}

The result of Theorem \ref{6.1}, together with Theorem \ref{main1}, completes our proof of the main result.
\begin{proof}[Proof of Theorem \ref{classification}]
Let $G$ be a closed subgroup with $Aut(\Gamma)^*\leq G<Sym_{\{l, r\}}(\Gamma)$. Then by Theorem \ref{6.1}, $G\leq S_{\{l, r\}}{(\Gamma)}$. Using the result of Theorem \ref{main1}, we have $G=S_X{(\Gamma)}^*$ for some subset $X\subseteq \{l, r\}$. This completes the proof of Theorem \ref{classification}.
\end{proof}



\bibliography{reducts_graph}
\bibliographystyle{asl}

\end{document}